\newtheorem{thm}{Theorem}[section]
\newtheorem{prop}[thm]{Proposition}
\newtheorem{lem}[thm]{Lemma}
\newtheorem{definition}[thm]{Definition}
\newcommand{\R}{\mathbb{R}}
\newcommand{\Z}{\mathbb{Z}}
\newcommand{\lb}{\langle}
\newcommand{\rb}{\rangle}
\renewcommand{\d}{\,\operatorname{d}\!}
\renewcommand{\S}{\operatorname{S}}
\begin{document}

\title[Smoothing for the Zakharov \& KGS Systems]{Smoothing for the Zakharov \& Klein-Gordon-Schr\"{o}dinger Systems on Euclidean Spaces}

\author{E. Compaan}
\address{Department of Mathematics, University of Illinois, Urbana, IL}
\thanks{The author was supported by a National Physical Science Consortium fellowship.}
\subjclass[2010]{35Q53, 35B41}
\email{compaan2@illinois.edu}
\urladdr{www.math.uiuc.edu/~compaan2} 

\begin{abstract}This paper studies the regularity of solutions to the Zakharov and Klein-Gordon-Schr\"{o}dinger systems at low regularity levels. The main result is that the nonlinear part of the solution flow falls in a smoother space than the initial data. This relies on a new bilinear $X^{s,b}$ estimate, which is proved using delicate dyadic and angular decompositions of the frequency domain. Such smoothing estimates have a number of implications for the long-term dynamics of the system. In this work, we give a simplified proof of the existence of global attractors for the Klein-Gordon-Schr\"{o}dinger flow in the energy space for dimensions $d = 2,3$. Secondly, we use smoothing in conjunction with a high-low decomposition to show global well-posedness of the Klein-Gordon-Schr\"{o}dinger evolution on $\mathbb{R}^4$ below the energy space for sufficiently small initial data. 
\end{abstract}

\maketitle


\section{Introduction}

In this work, we derive smoothing estimates for the Klein-Gordon Schr\"{o}dinger system with Yukawa coupling: 
\begin{equation} \label{eq:KGS}
\begin{cases}
i u _t + \Delta u = - uv, \quad x \in \R^d,\; t \in \R\\ 
v_{tt} + (-\Delta + 1) v = | u |^2 \\
\Big(u(\cdot, 0), v(\cdot, 0), v_t(\cdot, 0)\Big) = (u_0, v_0, v_1)   \in H^s \times H^r \times H^{r-1}.
\end{cases}
\end{equation}
We also consider the related Zakharov system:
\begin{equation} \label{eq:Z}
\begin{cases}
i u _t + \Delta u = un, \quad x \in \R^d,\; t \in \R\\ 
n_{tt} -\Delta n = \Delta | u |^2 \\
\Big(u(\cdot, 0 ), n(\cdot, 0), n_t(\cdot,0)\Big) = (u_0, n_0, n_1)\in H^s \times H^r \times H^{r-1}.
\end{cases}
\end{equation}
The Klein-Gordon-Schr\"{o}dinger system \eqref{eq:KGS} is a model from classical particle physics, in which $u$ represents a complex nucleon field and $v$ a real meson field \cite{FT}. The Zakharov system \eqref{eq:Z} was introduced in \cite{Zak} to model Langmuir turbulence in ionized plasma. In it, the function $u$ represents the envelope of a oscillating electric field while $n$ represents the deviation of ion density from its average value. 

Solutions of the Klein-Gordon-Schr\"{o}dinger system conserve the mass and the Hamiltonian energy:
\[M(u)= \| u \|_{L^2} \]
\[ E(u, v, v_t) =\|\nabla u \|_{L^2}^2  + \frac12 \Big(\| v\|_{L^2}^2 + \|v_t\|_{L^2}^2 + \|\nabla v\|_{L^2}^2 \Big) - \int|u|^2v \d x.\]
Note that the energy space for the Klein-Gordon-Schr\"{o}dinger system is $H^1 \times H^1 \times L^2$. Similarly, the Zakharov system has the following conservation laws:
\[M(u)= \| u \|_{L^2} \]
\[\widetilde{E}(u, n, n_t) = \|\nabla u \|_{L^2}^2+ \frac12\Big(\| n\|_{L^2}^2 + \|(-\Delta)^{-1/2}n_t\|_{L^2}^2 \Big)  + \int|u|^2n \d x. \]
This law identifies the energy space as $H^1 \times L^2 \times \dot{H}^{-1}$. 

The wellposedness theory for the Zakharov system on Euclidean spaces has been extensively studied. In \cite{SS} existence results for smooth solutions in dimensions $d \leq 3$ were derived. The regularity assumptions and dimension restrictions were weakened in \cite{AA, OT, KPV, SW}. In \cite{GTV}, Ginibre, Tsutsumi, and Velo applied Bourgain's restricted norm method \cite{B1} to obtain local existence results in all dimensions, covering the full subcritical regularity range (excluding the endpoints) for $d\geq 4$. In dimension $d=1$, they obtained local existence at the critical regularity $L^2 \times H^{-\frac12} \times H^{-\frac32}$. In \cite{Hol}, local ill-posedness results were obtained for some regularities outside the well-posedness regime  established in \cite{GTV}. In dimensions two and three, the local well-posedness was obtained in the critical space $L^2 \times H^{-\frac12} \times H^{-\frac32}$ in \cite{BHHT} and \cite{BH} respectively. These results are sharp in the sense that the data-to-solution map fails to be analytic at lower regularity levels. 

In one dimension, the Hamiltonian conservation law upgrades local existence to global for initial data in $H^1 \times L^2 \times (-\Delta)^{\frac12} L^2$. This result was improved in \cite{P3, P4} using Bourgain's high-low decomposition \cite{B2} method and the I-method \cite{CKSTT} respectively. It was lowered further to global existence in $L^2 \times H^{-1/2} \times H^{-3/2}$ in \cite{CHT} using an iteration method relying on the $L^2$ conservation of $u$. In two and three dimensions, global existence in the energy space follows from the Hamiltonian conservation as long as $\|u_0\|_{L^2}$ is sufficiently small. In two dimensions, global well-posedness for some regularities below the energy space was obtained in \cite{Kis} using the I-method.

Unlike the Zakharov, the nonlinearity in the wave part of the Klein-Gordon-Schr\"{o}dinger system contains no derivative. Thus we have well-posedness at somewhat lower regularity levels for this system. For the two-dimensional Klein-Gordon-Schr\"{o}dinger, local well-posedness holds in $ H^{-\frac14 +} \times H^{-\frac12}\times H^{-\frac32}$; see \cite{P1}. The same result, up to endpoints, hold in three dimensions \cite{P2}. Local existence in higher dimensions follows from the estimates derived for the Zakharov in \cite{GTV}. 

For the Klein-Gordon-Schr\"{o}dinger system in dimensions $d \leq 3$, global existence in $H^1 \times H^1 \times L^2$ follows from the Hamiltonian conservation law. In three dimensions, global existence somewhat below the energy was proved using Bourgain's high-low decomposition method in \cite{P5}. This was improved in \cite{Tz}, where the I-method was used to obtain global existence below the energy for $d \leq 3$. Global existence for the three-dimensional KGS in $L^2 \times L^2 \times H^{-1}$ was obtained in \cite{CHT}, again relying on the $L^2$ conservation law for $u$. This was lowered to $L^2 \times H^{-\frac12}\times H^{-\frac32}$ for $d=2$ and to $L^2 \times H^{-\frac12 +} \times H^{-\frac32 +}$ for $d=3$ in \cite{P1}. We also note that global existence for the closely related wave-Schr\"{o}dinger system on $H^s \times H^r \times H^{r-1}$ for some $s,r < 0$ was shown in \cite{Ak}. 

This paper is concerned with the dynamics of solutions to \eqref{eq:KGS} and \eqref{eq:Z}. The main result is that the difference between the linear evolution and the nonlinear evolution resides in a higher-regularity space. This follows from a refinement of the bilinear $X^{s,b}$ local theory estimates, similar to that contained in \cite{CDKS} for two dimensional nonlinear Schr\"{o}dinger equations with quadratic nonlinearities. The difficulty in this case is that the addition of a Klein-Gordon or wave equation to the Schr\"{o}dinger to obtain \eqref{eq:KGS} or \eqref{eq:Z} respectively complicates the resonance relations in the system, making the estimates more challenging. As in \cite{CDKS}, the proof depends on delicate decompositions of the frequency space to control the nonlinear interactions, with especial care being required near the resonant sets of the interaction. 

In the remainder of the paper we present some consequences of the smoothing estimate. One of these is a simplified proof of the existence of a global attractor for the forced and damped Klein-Gordon-Schr\"{o}dinger equation in dimensions $d =2,3$. This result is known \cite{LW}, but the existing proof relies on truncation arguments to obtain the necessary compactness. The truncation step can be eliminated by the employment of the smoothing effect of the nonlinear flow, significantly simplifying the argument. Secondly, we show global existence below the energy space for the four-dimensional Klein-Gordon-Schr\"{o}dinger system for $\|u_0\|_{L^2}$ sufficiently small using a variant of Bourgain's high-low argument \cite{B2}. Similar smoothing estimates have been used with high-low decomposition method to prove global existence for other equations -- see e.g. \cite{DET} for results on the periodic fractional Schr\"{o}dinger equation. We remark that method of \cite{CHT} to obtain global existence for the Klein-Gordon-Schr\"{o}dinger does not apply; in four dimensions, there is not sufficient slack in the wave equation estimates to iterate that scheme. The refinement used in \cite{P1}, which uses $X^{s,b}$ estimates instead of Strichartz space controls, also cannot be directly applied. Smoothing estimates provide a straightforward proof of the global existence.

The paper is organized as follows. In Section \ref{notation}, we introduce the function spaces required for the estimates, and in Section \ref{statements}, we state the results of the paper. Sections \ref{smoothingProof}, \ref{gaProof}, and \ref{R4Proof} contain the proofs of the main smoothing estimate, the existence of the Klein-Gordon-Schr\"{o}dinger attractor, and global well-posedness in $\mathbb{R}^4$ respectively. Finally, in Section \ref{estimateProof}, we prove the main bilinear estimate. 

\section{Notation \& Function Spaces} \label{notation}

Before stating the results of this paper, we need some definitions. The Fourier sequence of a function $u \in L^2(\mathbb{R}^d)$ is defined by 
\[ \hat{u}(\xi) = \int _{\mathbb{R}^d}  u(x) e^{-i\xi\cdot x} \; \d x. \]
We use Sobolev spaces $H^s(\mathbb{R}^d)$, with their norms given by 
\[ \| u \|_{H^s} = \| \lb \xi \rb^s \hat{u}(\xi) \|_{L^2}, \]
where $\lb \xi \rb = (1 + |\xi|^2)^{1/2}$. We also use the homogeneous space $\dot{H}^s$, with $\|u\|_{\dot{H}^s} = \| |\xi|^s \hat{u}(\xi)\|_{L^2}$.  

To prove the desired estimates, we work with transformed versions of the systems \eqref{eq:KGS} and \eqref{eq:Z}. Define $A = (1 - \Delta)^{1/2}$. For the Klein-Gordon-Schr\"{o}dinger system, let $v^\pm = v \pm i A^{-1} v_t$. Under this transformation, \eqref{eq:KGS} becomes
\begin{equation} \label{eq:tKGS}
\begin{cases}
i u _t + \Delta u = -\frac12  u(v^+ + v^-), \\
iv_t^\pm  \mp Av^\pm  = \mp A^{-1}|u|^2 \\
\Big(u(\cdot, 0), v^\pm(\cdot, 0)\Big) = (u_0, v_0^\pm)   \in H^s \times H^r.
\end{cases}
\end{equation}

For the Zakharov system \eqref{eq:Z}, we similarly define $n^\pm = n \pm i A^{-1} n_t$. After this transformation, \eqref{eq:Z} becomes 
\begin{equation} \label{eq:tZ}
\begin{cases}
i u _t + \Delta u = \frac12 u(n^+ + n^-), \\ 
in_t^\pm \mp A n^\pm  = \mp A^{-1}\Delta | u |^2 \mp A^{-1} \operatorname{Re} n^\pm \\
\Big(u(\cdot, 0 ), n^\pm(\cdot, 0), n_t(\cdot,0)\Big) = (u_0, n_0^\pm )\in H^s \times H^r.
\end{cases}
\end{equation}
Notice that we can recover the original function $v$ and $n$ by taking the real part of $v^\pm$ and $n^\pm$ respectively. The corresponding Bourgain spaces are defined by the norms
\begin{align*}
 \|u\|_{X^{s,b}} &= \|\lb \xi \rb^{s} \lb \tau + |\xi|^2\rb^{b} \hat{u}(\xi, \tau)\|_{L^2_{\xi,\tau}} \\
 \|v\|_{X^{s,b}_\pm} &= \| \lb \xi \rb^{s} \lb \tau \pm |\xi | \rb^{b} \hat{u}(\xi,\tau)\|_{L^2_{\xi,\tau}}. 
\end{align*}
The multiplier for the Klein-Gordon evolution is technically $\lb \tau \pm \lb \xi\rb\rb$ rather than $\lb \tau \pm | \xi| \rb$, but $\lb \tau\pm \lb \xi \rb \rb \approx \lb \tau \pm |\xi| \rb$ and using the latter multiplier results in a cleaner exposition. We also define the time-restricted versions of these norms:
\begin{align*}
 \|u\|_{X^{s,b}_\delta} &= \inf_{u = \tilde{u}, |t | \leq \delta}\| \tilde{u}\|_{X^{s,b}} \\
 \|v\|_{X^{s,b}_{\pm, \delta}} &= \inf_{v = \tilde{v}, |t| \leq \delta} \| \tilde{v}\|_{X^{s,b}_\pm}. 
\end{align*}

The expression $e^{-tL}u_0$ will denote the solution to the linear problem $u_t  + Lu = 0$ with $u(\cdot, 0) = u_0$. Thus, for example, $e^{it\Delta}u_0$ is the linear Schr\"{o}dinger flow. 
We will write $a \lesssim b$ to indicate that there is an absolute constant $C$ such that $a \leq Cb$. The symbols $\gtrsim$ is used similarly. The expression $a \approx b$ means that $a \lesssim b$ and $a \gtrsim b$. We write $a - $ for $a - \epsilon$ when $\epsilon > 0$ is arbitrary; similarly we write $a +$ for $a + \epsilon$. 

\section{Statement of Results}\label{statements}

In the first part of this section, we give the main theorems which demonstrate the smoothing effect of the nonlinear flow. We then give two results which show some of the implications of smoothing for the global dynamics of the system. First, we state the theorem for the Zakharov system. 

\begin{thm}\label{Zthm}
Consider the Zakharov evolution \eqref{eq:tZ} on $\mathbb{R}^d$. If $d = 2,3$, assume that $r \geq - \frac12$  with $2s- r \geq \frac12$ and $r < s < r +1$. Then
\begin{align*}
u(t) - e^{it\Delta}u_0 \in C_{t}H^{s+\alpha}_x\\
n^\pm(t) -e^{\mp i t J}n_0^\pm \in C_{t} H^{r + \beta}_x 
\end{align*}
on the interval of existence as long as $\alpha < \min\{ \frac12, r -s + 1, r + 2 - \frac{d}2\}$ and $\beta < \min\{ 2s -r - \frac12, s - r\}$.
If $d \geq 4 $, assume $r > \frac{d-4}{4} $ and $2s - r > \frac{d-2}{2}$ with $r \leq s \leq r+1$. Then the same statement holds if $\beta < \min\{ 2s - r - \frac{d-2}2, s-r\}$. 
\end{thm}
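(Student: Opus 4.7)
My plan is to work in the Bourgain $X^{s,b}$ framework via Duhamel. Writing
\begin{align*}
u(t) - e^{it\Delta}u_0 &= \tfrac{i}{2}\int_0^t e^{i(t-s)\Delta}\big[u(n^+ + n^-)\big](s)\,\d s,\\
n^\pm(t) - e^{\mp itJ}n_0^\pm &= \pm i\int_0^t e^{\mp i(t-s)J}A^{-1}\big(\Delta|u|^2 + \operatorname{Re} n^\pm\big)(s)\,\d s,
\end{align*}
and combining the standard energy estimate for $X^{s,b}$ with the embedding $X^{\sigma,\frac12+}_\delta \hookrightarrow C_tH^\sigma_x$, the two claims reduce to bounds on the Duhamel integrals in $X^{s+\alpha,\frac12+}_\delta$ and $X^{r+\beta,\frac12+}_{\pm,\delta}$, which in turn reduce to controlling the nonlinearities in the dual spaces $X^{s+\alpha,-\frac12+}$ and $X^{r+\beta,-\frac12+}_\pm$.

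The required control is furnished by three estimates:
\begin{align*}
\|u\,n^\pm\|_{X^{s+\alpha,-\frac12+}} &\lesssim \|u\|_{X^{s,\frac12+}}\,\|n^\pm\|_{X^{r,\frac12+}_\pm},\\
\|A^{-1}\Delta(u_1\overline{u_2})\|_{X^{r+\beta,-\frac12+}_\pm} &\lesssim \|u_1\|_{X^{s,\frac12+}}\,\|u_2\|_{X^{s,\frac12+}},\\
\|A^{-1}\operatorname{Re} n^\pm\|_{X^{r+\beta,-\frac12+}_\pm} &\lesssim \|n^\pm\|_{X^{r,\frac12+}_\pm}.
\end{align*}
The third is trivial since $A^{-1}$ gains two derivatives and our $\beta$ stays well below $2$. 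The first two are the refined bilinear estimates that will be proved in Section \ref{estimateProof}; their $\alpha = \beta = 0$ case gives the local theory that produces solutions in $X^{s,\frac12+}_\delta \times X^{r,\frac12+}_{\pm,\delta}$ on the existence interval, and the same estimates applied with the full gain put the Duhamel terms into the desired smoother spaces.

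The proof of the bilinear estimates proceeds by Plancherel followed by dyadic Littlewood--Paley decompositions in the three spatial frequencies $|\xi|,|\xi_1|,|\xi_2|$ (with $\xi = \xi_1 + \xi_2$) and in the three modulations $\sigma_0,\sigma_1,\sigma_2$. For the first estimate the resonance identity reads
\[\sigma_0 - \sigma_1 - \sigma_2 \;=\; 2\xi_1\cdot\xi_2 + |\xi_2|^2 \,\mp\, |\xi_2|,\]
so the largest modulation must dominate this quantity; in nonresonant regimes one extracts a power of that modulation and closes via Strichartz or $L^4$-type bounds. The hard part, as in \cite{CDKS}, will be the resonant region in which all three modulations can be simultaneously small and $\xi_1$ is pinned to a thin annular cap determined by $\xi_2$. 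There, an angular decomposition of $\xi_1$ relative to $\xi_2$ is needed to harvest the last bit of smoothing, and the volume counts of these caps scale with dimension. This dimensional dependence is what splits the admissible $(\alpha,\beta)$ region between $d=2,3$ and $d\geq 4$, and in particular accounts for the threshold $r + 2 - d/2$ entering the bound on $\alpha$.
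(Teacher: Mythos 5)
Your proposal is correct and follows essentially the same route as the paper: Duhamel's formula, the local-theory bound, and the embedding $X^{\sigma,b}_\delta\hookrightarrow C_tH^\sigma_x$ for $b>\tfrac12$ reduce the theorem to the bilinear estimates applied with the extra gain, where the Schr\"odinger-nonlinearity estimate is the new one proved in Section \ref{estimateProof}, while the wave-nonlinearity estimate with $\sigma=r+\beta$ is the known local-theory bound quoted from \cite{GTV,BHHT,BH}, whose hypotheses are exactly what produce the stated constraints on $\beta$. One minor slip: $A^{-1}=(1-\Delta)^{-1/2}$ gains one derivative, not two, but your treatment of the linear term $A^{-1}\operatorname{Re}n^\pm$ still goes through since $\beta<s-r\leq 1$.
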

The restrictions on $r$ and $s$ are necessary to ensure well-posedness of the equation. The values $\alpha$ and $\beta$ represent the smoothing effect. For instance, in dimensions $d = 2,3$, for initial data in $H^{\frac12} \times L^2$, the nonlinear part of the evolution lies in $H^{1-} \times H^{\frac12 - }$. A similar result holds for the Klein-Gordon-Schr\"{o}dinger system. 
\begin{thm}\label{KGSthm}
Consider the Klein-Gordon-Schr\"{o}dinger evolution \eqref{eq:tKGS} on $\mathbb{R}^d$. If $d = 2,3$, assume $s > -\frac14$ and $r > -\frac12$ with $2s -r  \geq - \frac32$ and $r - 2 < s <  r + 1$. Then we have 
\begin{align*}
u(t) - e^{it\Delta}u_0 \in C_{t}H^{s+\alpha}_x\\
v^\pm(t) -e^{\mp i t A}v_0^\pm \in C_{t} H^{r + \beta}_x 
\end{align*}
on the interval of existence as long as $\alpha < \min\{ \frac12, r -s + 1, r + 2 - \frac{d}2\}$ and $\beta < \min\{ 2s -r + \frac32, s - r + 2\}$. If $d \geq 4 $, assume $r > \frac{d-4}{4} $ and $2s - r > \frac{d-6}{2}$ with $r-2 \leq s \leq r+1$. Then the same statement holds if $\beta < \min\{ 2s - r - \frac{d-6}2, s-r+2\}$. 
\end{thm}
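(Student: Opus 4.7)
The plan is to reduce Theorem \ref{KGSthm} to a pair of bilinear $X^{s,b}$ estimates with regularity gain, and to invoke the standard Bourgain machinery. Writing Duhamel for the transformed system \eqref{eq:tKGS} gives
\begin{align*}
u(t) - e^{it\Delta}u_0 &= \tfrac{i}{2}\int_0^t e^{i(t-\tau)\Delta}\bigl[u(v^++v^-)\bigr](\tau)\, d\tau,\\
v^\pm(t) - e^{\mp itA}v_0^\pm &= \mp\, i\int_0^t e^{\mp i(t-\tau)A}\bigl[A^{-1}|u|^2\bigr](\tau)\, d\tau.
\end{align*}
Applying the usual linear Duhamel bound in $X^{s,b}$ with $b>\tfrac12$ close to $\tfrac12$, these expressions are controlled by
$\|u(v^++v^-)\|_{X^{s+\alpha,b-1}_\delta}$ and $\|A^{-1}|u|^2\|_{X^{r+\beta,b-1}_{\pm,\delta}}$ respectively.

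The next step would be to reduce the theorem to the two bilinear estimates (to be proved in Section \ref{estimateProof})
\begin{align*}
\|uv^\pm\|_{X^{s+\alpha,b-1}_\delta} &\lesssim \|u\|_{X^{s,b}_\delta}\|v^\pm\|_{X^{r,b}_{\pm,\delta}},\\
\|A^{-1}|u|^2\|_{X^{r+\beta,b-1}_{\pm,\delta}} &\lesssim \|u\|_{X^{s,b}_\delta}^2,
\end{align*}
which are valid exactly in the stated range of $(s,r,\alpha,\beta,d)$. I would quote the local theory at regularity $H^s\times H^r$ to obtain control of $u$, $v^\pm$ in $X^{s,b}_\delta$, $X^{r,b}_{\pm,\delta}$ on a short interval $[-\delta,\delta]$. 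Combining these with the embedding $X^{\sigma,b}_\delta\hookrightarrow C([-\delta,\delta];H^\sigma)$ (for $b>\tfrac12$) gives the smoothing statement on a single subinterval; since the smoothed norms of the differences only grow by a fixed multiple of the local bound per step, iteration extends the statement to the entire interval of existence.

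The one genuine subtlety at this stage, beyond the bilinear estimates themselves, is to check that the pointwise-in-time linear propagator does not destroy the regularity gain at endpoints of iteration steps, which is why one organizes the iteration at the level of $X^{s+\alpha,b}$ and $X^{r+\beta,b}_\pm$ norms rather than purely in $C_t H^\sigma$. The bookkeeping, in particular tracking the constraint $r-2<s<r+1$ (so that $v^\pm$ can be multiplied by $u$ without losing all the gain) and the role of the smoothing factor $\langle\xi\rangle^{-1}$ coming from $A^{-1}$ in the wave nonlinearity (giving the extra $2$ in $\beta < s-r+2$ and in $\beta < 2s-r+\tfrac32$ when $d=2,3$), is routine once the bilinear estimates are in hand.

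The main obstacle is not in this theorem per se but in the bilinear estimates it invokes: one must perform dyadic and angular Littlewood--Paley decompositions and treat the various interaction regimes (high-high to high, high-high to low, high-low) separately, paying careful attention to the resonant set where $\tau+|\xi|^2$ and $\tau'\pm|\xi'|$ can both be small simultaneously with $\xi+\xi'$ at the output frequency. For this theorem specifically, the conceptual difficulty is simply identifying that the largest gains $\alpha,\beta$ allowed by the counting/measure arguments in those bilinear estimates are exactly those appearing in the statement, and that the constraints $2s-r>\tfrac{d-6}{2}$ and $r>\tfrac{d-4}{4}$ in high dimensions arise precisely as the thresholds at which a positive gain can be squeezed out of the most delicate frequency regime.
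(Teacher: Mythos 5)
Your proposal follows essentially the same route as the paper: write the Duhamel formula for \eqref{eq:tKGS}, control the integral terms via the standard $X^{s,b}$ Duhamel lemma, invoke Propositions \ref{schrodinger_est} and \ref{kg_est} together with the local theory bound \eqref{eq:localBound}, use the embedding $X^{\sigma,b}_\delta\hookrightarrow C([0,\delta];H^\sigma)$ for $b>\tfrac12$, and iterate over local existence intervals. The only cosmetic difference is that you obtain time continuity directly from the $X^{s+\alpha,b}$ embedding on each step, whereas the paper gives a separate splitting argument in $\epsilon$; both are correct and the substance is identical.
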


For the Klein-Gordon-Schr\"{o}dinger system, the smoothing effect on the wave part is much stronger than that on the Zakharov because of the lack of derivatives in the nonlinearity. For instance, in dimensions $d = 2,3$ and initial data in $L^2 \times L^2$, the nonlinear part is in $H^{\frac12-} \times H^{\frac32 -}$.  
The proof of these results is in Section \ref{smoothingProof}. It depends on the following new bilinear estimate for the Schr\"{o}dinger nonlinearity, together with the known local theory estimates for the wave equation nonlinearity. 

\begin{prop}\label{schrodinger_est}
Assume $d \geq 2$ and $b = \frac12+$ with $s, r > -\frac12$. Then the estimate 
\begin{equation} \label{eq:sch}
 \| uv\|_{X^{s+\alpha, b-1}} \lesssim \|u\|_{X^{s,b}}\|v\|_{X^{r,b}_\pm}
\end{equation}
holds for $\alpha< \min\{\frac12, r - s + 1, r + 2 - \frac{d}2\}$. The same result holds with the restricted versions of the norms. 
\end{prop}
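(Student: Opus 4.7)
By duality, \eqref{eq:sch} is equivalent to the trilinear form bound
\[
 \Big| \int uv\,\overline{w}\,\d x\,\d t \Big| \lesssim \|u\|_{X^{s,b}}\|v\|_{X^{r,b}_\pm}\|w\|_{X^{-s-\alpha,1-b}}.
\]
Setting $f(\xi_1,\tau_1) = \lb\xi_1\rb^s\lb\tau_1+|\xi_1|^2\rb^b|\hat u|$, $g(\xi_2,\tau_2) = \lb\xi_2\rb^r\lb\tau_2\pm|\xi_2|\rb^b|\hat v|$, and $h(\xi,\tau) = \lb\xi\rb^{-s-\alpha}\lb\tau+|\xi|^2\rb^{1-b}|\hat w|$, and substituting $\xi=\xi_1+\xi_2$, $\tau=\tau_1+\tau_2$ into the Fourier representation, the estimate reduces to an $L^2$ bound on an explicit, nonnegative trilinear convolution form in $f,g,h$. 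I would then perform dyadic decompositions in frequency, $|\xi_j|\sim N_j$ and $|\xi|\sim N$, and in modulation, $\lb\tau_1+|\xi_1|^2\rb\sim L_1$, $\lb\tau_2\pm|\xi_2|\rb\sim L_2$, $\lb\tau+|\xi|^2\rb\sim L$, and prove that each piece is summable with the claimed exponent.

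The entire source of smoothing is the resonance identity
\[
(\tau+|\xi|^2)-(\tau_1+|\xi_1|^2)-(\tau_2\pm|\xi_2|) = 2\xi_1\cdot\xi_2+|\xi_2|^2\mp|\xi_2|,
\]
which forces $\max(L,L_1,L_2)\gtrsim |2\xi_1\cdot\xi_2+|\xi_2|^2\mp|\xi_2||$. Pigeonholing the largest modulation, one of the weights $\lb\cdot\rb^b$ or $\lb\cdot\rb^{1-b}$ can be converted into a power of $N$, and this gain must absorb $N^\alpha$ together with the loss from $N_1^{-s}$ or $N_2^{-r}$ when $s,r<0$.

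Next I would split into the frequency configurations $N\sim N_1\gg N_2$, $N\sim N_2\gg N_1$, and $N_1\sim N_2\gtrsim N$. Away from the resonant set in each regime, I plan to combine the modulation gain with the dispersive Strichartz embeddings for $X^{s,b}$ --- in particular the Schr\"odinger Strichartz $X^{0,b}\hookrightarrow L^{2(d+2)/d}_{t,x}$ and the analogous wave/Klein-Gordon Strichartz for $X^{r,b}_\pm$ --- and close via H\"older. The condition $\alpha<r+2-d/2$ records how much Sobolev embedding is available in dimension $d$, while $\alpha<r-s+1$ arises in the high-high regime where a Sobolev weight of $N_2^{r-s}$ must be traded for resonance gain.

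The delicate case, and the main obstacle, is the near-resonant regime where $|2\xi_1\cdot\xi_2+|\xi_2|^2\mp|\xi_2||$ is small. Writing $\theta$ for the angle between $\xi_1$ and $\xi_2$, this happens when $\cos\theta\approx(1\mp|\xi_2|)/(2|\xi_1|)$; the crude pigeonhole loses too much, so following \cite{CDKS} I would perform an angular decomposition of $S^{d-1}$ into caps whose width is chosen so that the resonance is essentially constant under the interaction of any two caps. Cauchy--Schwarz in the cap index then turns the measure of each cap into the smallness that the lost resonance gain would otherwise supply, and the threshold $\alpha<\frac12$ emerges from balancing cap width against modulation losses. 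The principal new difficulty beyond \cite{CDKS} is that the wave multiplier $\tau_2\pm|\xi_2|$ shifts the resonant surface away from the cone $2\xi_1\cdot\xi_2+|\xi_2|^2=0$; the two correction terms $|\xi_2|^2$ and $\mp|\xi_2|$ have comparable size at low $|\xi_2|$, so the geometry of the near-resonant set, and the correct choice of cap width relative to $N_1,N_2$, must be analysed more carefully than in the pure Schr\"odinger--Schr\"odinger setting.
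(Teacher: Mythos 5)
Your skeleton matches the paper's argument: the same duality reduction to a weighted trilinear form, the same resonance function $2\xi_1\cdot\xi_2+|\xi_2|^2\mp|\xi_2| = 2|\xi_1||\xi_2|\bigl(\cos\theta + \frac{|\xi_2|\mp 1}{2|\xi_1|}\bigr)$, the same dyadic frequency decomposition with case analysis according to which modulation is largest, and extra care near the resonant set (the paper decomposes dyadically in the normalized distance $\nu$ to the resonant surface and exploits that the resonant shell in $\xi_2$ has thickness $\lesssim \nu|\xi_1|$, which plays the role of your cap width; your CDKS-style angular decomposition with Cauchy--Schwarz in the cap index is a close cousin and plausibly workable there).

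The genuine gap is in the non-resonant regimes, where you propose to close with linear Strichartz embeddings plus H\"older. That cannot reach the stated range of $\alpha$. The paper's workhorse in every non-resonant case is a bilinear gain of the form $\min(M_i)^{\frac{d-1}{2}}\max(M_i)^{-\frac12}$, obtained by tiling the large dyadic annuli into cubes of side $\min(M_i)$ and changing variables $u=-\xi_1-\xi_2$, $v=|\xi_1|^2\pm|\xi_2|-\cdots$ with Jacobian $\bigl|2\xi_{1,j}\pm\tfrac{\xi_{2,j}}{|\xi_2|}\bigr|\approx M_1$; this is the bilinear refinement of Strichartz and is \emph{not} a consequence of H\"older with linear Strichartz norms. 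Concretely, in the high--high$\to$low configuration ($M_1\approx M_2\gg M_0$, $|A|\gtrsim 1$, largest modulation on the dual function), converting the modulation weight gives $M^{-(1-b)}\lesssim (M_1M_2)^{-\frac12+}$, and then H\"older/Bernstein on the product localized to output frequency $M_0$ yields at best a factor $M_0^{d/2}$, leading to the requirement $\alpha<r+1-\tfrac d2$; the paper's change of variables instead yields $M_0^{\frac{d-1}{2}}M_1^{-\frac12}$ and hence $\alpha<r+2-\tfrac d2$, a full derivative better and exactly the stated threshold. Similar losses appear in the $M_2\ll M_0\approx M_1$ cases (where the $M_0^{-1/2}$ Jacobian gain drives the $\alpha<\tfrac12$ and $\alpha<1$ conditions, and where wave/Klein--Gordon $L^4$-type Strichartz in low dimensions costs positive powers of $M_2$ that conflict with $r>-\tfrac12$), and the dual weight $\lb\tau_0-|\xi_0|^2\rb^{1-b}$ with $1-b=\tfrac12-$ sits below the usual Strichartz threshold when the largest modulation is on $f_1$ or $f_2$. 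To repair the plan you should replace ``linear Strichartz $+$ H\"older'' in the non-resonant cases by the cube-decomposition/change-of-variables argument (or an equivalent bilinear Strichartz refinement adapted to the Schr\"odinger--wave interaction), which is precisely what the paper does, including in the near-resonant case where the same change of variables is combined with the $\nu$-decomposition.
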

We also state the estimates for the wave and Klein-Gordon nonlinear terms: 
\begin{prop}[\cite{BH, BHHT, GTV}]
Let $b = \frac12 +$. 
If $d = 2,3$, assume $s > -\frac 14$ with $2s - \sigma > \frac12$ and $ s > \sigma $. If $d \geq 4$, assume $2s - \sigma> \frac{d -2}2 $ and $\sigma \leq s$, $s \geq 0$. Then
\[ \| A(|u|^2)\|_{X^{\sigma, b-1}_\pm} \lesssim \|u\|_{X^{s,b}}^2. \]
The same result holds for the restricted versions of the norms.
\end{prop}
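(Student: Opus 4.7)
Since this estimate is attributed to the cited works, I would follow the standard Bourgain-space bilinear analysis for wave/Klein--Gordon coupling to $|u|^2$. The first step is to dualize: by Plancherel, the inequality reduces to a trilinear bound
\[
\Bigl|\iint \widehat{u_1}(\xi_1,\tau_1)\,\overline{\widehat{u_2}(\xi_2,\tau_2)}\,\overline{\widehat{w}(\xi_3,\tau_3)}\,\langle\xi_3\rangle\,\d\xi\,\d\tau\Bigr| \lesssim \|u_1\|_{X^{s,b}}\|u_2\|_{X^{s,b}}\|w\|_{X^{-\sigma,1-b}_\pm}
\]
on the support where $\xi_1 - \xi_2 = \xi_3$ and $\tau_1 - \tau_2 = \tau_3$. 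I would then perform Littlewood--Paley decompositions in both spatial frequency ($|\xi_j|\sim N_j$) and modulation ($\langle \tau_1+|\xi_1|^2\rangle \sim L_1$, $\langle \tau_2+|\xi_2|^2\rangle \sim L_2$, $\langle \tau_3\pm|\xi_3|\rangle \sim L_3$), and sum at the end after establishing a single-scale bound with geometric gains in the $N_j$ and $L_j$.

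The engine of the argument is the algebraic identity
\[
(\tau_1+|\xi_1|^2) - (\tau_2+|\xi_2|^2) - (\tau_3 \pm |\xi_3|) = |\xi_1|^2 - |\xi_2|^2 \mp |\xi_3|,
\]
which yields $\max(L_1,L_2,L_3) \gtrsim \bigl||\xi_1|^2-|\xi_2|^2 \mp |\xi_3|\bigr|$. This is what lets the derivative $\langle\xi_3\rangle$ coming from $A$ be absorbed away from the resonant set. I would split into three cases according to which modulation is largest. When $L_3$ dominates, the $\langle\xi_3\rangle$ factor is paid for by the $L_3^{1-b}$ loss on the dual wave factor, and a dual Schr\"odinger Strichartz estimate (or bilinear $L^4$ bound of Bourgain type) closes the case. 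When $L_1$ or $L_2$ dominates, the modulation gain is transferred onto a Schr\"odinger factor, reducing to a Strichartz bound on the remaining product.

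The main obstacle is the high-high-to-low interaction $N_1\sim N_2\gg N_3$. There, $|\xi_1|^2-|\xi_2|^2 \approx \pm|\xi_3|$ holds only in a thin angular sector around the set where $\xi_1$ and $\xi_2$ are nearly parallel, so the naive modulation gain is insufficient to absorb $\langle\xi_3\rangle$. In $d=2,3$ this is handled by a Whitney-type angular decomposition around the resonant set combined with a bilinear Schr\"odinger estimate, while in $d\geq 4$ it is handled by a bilinear Strichartz refinement of Stein--Tomas type. The thresholds $2s-\sigma > \tfrac12$ (respectively $\tfrac{d-2}{2}$) together with $\sigma \leq s$ are precisely what is needed to sum the dyadic contributions in this regime; the low-high and comparable-frequency configurations are strictly easier and handled by standard Strichartz. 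The restricted-norm version then follows from the usual time-cutoff and extension procedure.
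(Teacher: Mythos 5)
The paper does not prove this proposition: it is cited verbatim from Ginibre--Tsutsumi--Velo and the Bejenaru--Herr(--Holmer--Tataru) papers, and the author only proves the companion \emph{Schr\"odinger}-side estimate (Proposition~\ref{schrodinger_est}) in Section~\ref{estimateProof}. So there is no in-paper argument to compare your sketch against line by line. That said, your plan is a faithful reconstruction of the toolkit actually used in the cited works \emph{and} in the paper's own Section~\ref{estimateProof} proof of the dual estimate: duality to a trilinear form, simultaneous Littlewood--Paley decomposition in frequency and modulation, the resonance identity $(\tau_1+|\xi_1|^2) - (\tau_2+|\xi_2|^2) - (\tau_3\pm|\xi_3|) = |\xi_1|^2-|\xi_2|^2\mp|\xi_3|$, a trichotomy on the dominant modulation, and an angular/Whitney decomposition near the resonant surface combined with a change-of-variables (Jacobian) gain to handle the near-parallel configuration $\xi_1\parallel\xi_2$. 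All of these ingredients appear, mutatis mutandis, in Cases 0--3 of Section~\ref{estimateProof} (compare the paper's quantity $A=\cos\alpha + \tfrac{|\xi_2|\mp 1}{2|\xi_1|}$ and the square/hypercube decompositions of the dyadic annuli).

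Two points where your sketch is looser than what is actually needed. First, you dismiss the high-low-to-high configuration ($N_1\sim N_3\gg N_2$ or $N_2\sim N_3\gg N_1$) as ``strictly easier'' and leave it to ``standard Strichartz.'' In fact that regime is exactly where the constraint $\sigma\le s$ is forced: the symbol there is roughly $\lb N_3\rb^{\sigma+1}\lb N_1\rb^{-s}\lb N_2\rb^{-s}\sim \lb N_1\rb^{\sigma+1-s}\lb N_2\rb^{-s}$, and without a modulation or bilinear gain of a full power one cannot sum; it is not qualitatively simpler than the balanced case. Second, in the dominant-$L_3$ subcase you invoke ``a dual Schr\"odinger Strichartz estimate or bilinear $L^4$ bound of Bourgain type''; the $L^4_{t,x}$ estimate is special to $d=2$, and for $d=3$ and $d\ge 4$ one must instead use the admissible pairs $L^{\frac{2(d+2)}{d}}_{t,x}$ (or the transversal bilinear refinement), which is where the dimension-dependent thresholds $2s-\sigma>\tfrac12$ vs.\ $2s-\sigma>\tfrac{d-2}{2}$ and $s\ge 0$ in $d\ge4$ really enter. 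These are sketch-level gaps rather than a wrong approach, but filling them is where the actual work in \cite{GTV,BHHT,BH} lives; the paper's Section~\ref{estimateProof} is a good model for how to carry out the angular/Jacobian step in detail.
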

\begin{prop}[\cite{GTV, P1, P2}] \label{kg_est}
Let $b = \frac12 +$. If $d = 2,3$, assume $s > -\frac 14$ with $2s - \sigma >  -  \frac32$ and $ \sigma -2 < s $. If $d \geq 4$, assume that $2s - \sigma >  \frac{d -6}2 $ and  $\sigma - 2 \leq s$, $s \geq 0$. Then
\begin{equation}\label{eq:kg} \| A^{-1}(|u|^2)\|_{X^{\sigma, b-1}_\pm} \lesssim \|u\|_{X^{s,b}}^2. \end{equation} The same result holds for the restricted versions of the norms.
\end{prop}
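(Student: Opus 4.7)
The plan is to follow the standard bilinear $X^{s,b}$ dyadic decomposition machinery of \cite{GTV, P1, P2}. First I would dualize: by Plancherel and the expansion $|u|^2 = u \bar{u}$, estimate \eqref{eq:kg} is equivalent to showing
\[ \int \frac{\langle \xi\rangle^{\sigma - 1}\, f(\xi_1, \tau_1)\, \overline{g(\xi_2, \tau_2)}\, \overline{h(\xi, \tau)}}{\langle \xi_1\rangle^s \langle \xi_2\rangle^s \langle \tau_1 + |\xi_1|^2 \rangle^b \langle \tau_2 + |\xi_2|^2 \rangle^b \langle \tau \pm |\xi| \rangle^{1 - b}} \,\d\xi_1 \d\tau_1 \d\xi \d\tau \lesssim \|f\|_{L^2} \|g\|_{L^2} \|h\|_{L^2}, \]
where $(\xi_2, \tau_2) = (\xi_1 - \xi, \tau_1 - \tau)$ and $f, g, h \in L^2$. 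The inverse derivative $A^{-1}$ contributes the favorable factor $\langle \xi\rangle^{-1}$ relative to the pure wave case, and it is precisely this factor that makes the Klein--Gordon estimate stronger than the corresponding Zakharov wave estimate.

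Next I would decompose dyadically in the three frequencies, setting $|\xi_1|\sim N_1$, $|\xi_2|\sim N_2$, $|\xi|\sim N$, and in the three modulation variables. The crucial algebraic identity
\[ (\tau_1 + |\xi_1|^2) - (\tau_2 + |\xi_2|^2) - (\tau \pm |\xi|) = |\xi_1|^2 - |\xi_2|^2 \mp |\xi| \]
guarantees that the maximum of the three modulations is $\gtrsim \bigl| |\xi_1|^2 - |\xi_2|^2 \mp |\xi| \bigr|$, providing a resonance lower bound to be exploited whenever the three frequencies are not in an exact resonant configuration.

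With these ingredients in hand, I would split the trilinear estimate according to the frequency configuration: (i) high--high to low, $N_1 \sim N_2 \gg N$; (ii) high--low, $N_1 \sim N \gg N_2$ (and the symmetric case); and (iii) comparable, $N \sim N_1 \sim N_2$. In each case, one identifies which of the three modulations is the largest (gaining from the fractional weight attached to it), localizes two factors in a tube set by the resonance identity, then applies Cauchy--Schwarz together with the standard $X^{s, 1/2+}$ embeddings into Strichartz spaces (for instance the $L^4_{t,x}$ Schr\"odinger estimate in low dimensions and its higher-dimensional analogues) to handle the remaining two $L^2$ functions.

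The main obstacle is the high--high to low regime (i), where the two Schr\"odinger frequencies nearly cancel so that the resonance identity only yields $\bigl||\xi_1|^2 - |\xi_2|^2\bigr| \ll N_1^2$. One must absorb the loss $\langle N_1 \rangle^{-2s}$ using a careful combination of the favorable $\langle N\rangle^{\sigma - 1}$ from $A^{-1}$ and the modulation gain, and this balance is exactly what is encoded by the hypotheses $2s - \sigma > -\tfrac{3}{2}$ (in $d = 2, 3$), respectively $2s - \sigma > \tfrac{d - 6}{2}$ (in $d \geq 4$), together with $\sigma - 2 \leq s$. These thresholds turn out to be sharp for making the final dyadic sum in $N_1, N_2, N$ and in the three modulation dyads absolutely convergent.
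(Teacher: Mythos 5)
The paper does not prove Proposition \ref{kg_est}; it is imported from \cite{GTV,P1,P2} --- the detailed bilinear argument the paper carries out in Section \ref{estimateProof} is for the new Schr\"odinger estimate, Proposition \ref{schrodinger_est}, not for this one. Your setup is sound and consistent with the machinery those references use: the duality reduction, the resonance identity $(\tau_1+|\xi_1|^2)-(\tau_2+|\xi_2|^2)-(\tau\pm|\xi|)=|\xi_1|^2-|\xi_2|^2\mp|\xi|$, and the dyadic frequency/modulation case split are all correct.

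As a proof, however, this is an outline with the hard part left out. The assertion that the thresholds $2s-\sigma>-\tfrac32$ (for $d=2,3$) and $2s-\sigma>\tfrac{d-6}{2}$ (for $d\geq4$) are ``exactly what is encoded'' by the high--high balance is precisely the thing that must be demonstrated; nothing in the sketch produces those exponents, and they are the entire content of the proposition. Two further points a referee would flag. First, the diagonal $L^4_{t,x}$ Schr\"odinger Strichartz estimate only holds at the $L^2$ level for $d\leq2$; for $d\geq3$ \cite{GTV} instead argue via a change of variables combined with kernel bounds, while \cite{P1,P2} use bilinear Strichartz and angular decompositions, so ``its higher-dimensional analogues'' conceals most of the actual work. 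Second, near the resonant set $|\xi_1|^2-|\xi_2|^2\mp|\xi|\approx0$ the modulation lower bound degenerates and the strategy you describe fails; one must estimate the measure of that set separately, exactly as the paper does in its own Cases 1.4, 2.4, 3.4 of the proof of Proposition \ref{schrodinger_est}. Without these pieces the argument does not close.
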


We remark that a half derivative gain is the best that can be hoped for in the Schr\"{o}dinger evolution from the use of such bilinear estimates. To see that the bilinear estimate \eqref{eq:sch} fails for $\alpha > \frac12$, let $\hat{u} = \chi_{B_1}$ and $\hat{v} = \chi_{B_2}$, where 
\begin{align*}
B_1 &= \left\{ (\xi_1, \xi_2, \ldots, \xi_d, \tau) \in \mathbb{R}^{d+1} :  |\xi_{1} - N | < N^{-1}, \; |\xi_{i}| < 1 \text{ for } i \geq 2,\;  |\tau + N^2| < 1\right\}, \\
B_2 &= \left\{ (\xi_1, \xi_2, \ldots, \xi_d, \tau) \in \mathbb{R}^{d+1} : \qquad |\xi_{1} | < N^{-1},\; |\xi_{i}| < 1 \text{ for } i \geq 2, \quad\qquad |\tau | < 1\right\} 
\end{align*}
for some $N \gg 1$. Then $\|u\|_{X^{s,b}} \approx N^{s - \frac12}$ and $\|v\|_{X^{s,b}_\pm} \approx N^{-\frac12}$, while $\widehat{uv}$ is roughly $N^{-1} \chi_{B_1}$, so that $\|uv\|_{X^{s+\alpha,b}} \approx N^{s+ \alpha- \frac32}$. This can only be bounded by $\|u\|_{X^{s,b}} \|v\|_{X^{s,b}_\pm} \approx N^{s-1}$ when $\alpha \leq \frac12$. 


As an application of the smoothing estimate, we study the existence of global attractors for the dissipative Klein-Gordon-Schr\"{o}dinger evolution. The existence of global attractors for dissipative PDEs has been extensively researched (see e.g. \cite{Tem} and the references therein). Proofs generally use the dissipative property to obtain decay of solutions in the energy space, followed by a weak-convergence argument to show that all flows eventually enter a compact absorbing ball. This second step is particularly challenging on noncompact spaces such as $\mathbb{R}^d$, where proving compactness can be difficult. For the dissipative Klein-Gordon-Schr\"{o}dinger evolution on $\mathbb{R}^d$, $d \leq 3$, the existence of a global attractor was proved in \cite{LW}. In the following, we simplify the proof using our smoothing estimate.

With the addition of damping and forcing terms, the Klein-Gordon-Schr\"{o}dinger \eqref{eq:KGS} system becomes 
\begin{equation}\label{eq:dKGS}
 \begin{cases}
iu_t + \Delta u + i \gamma u = -uv + f , \quad x \in \mathbb{R}^d\\
v_{tt} + (-\Delta + 1) v + \delta v_t = |u|^2 + g.
\end{cases}
\end{equation}
We will be concerned with $d= 2,3$ and initial data $\big(u(x,0),v(x,0),v_{t}(x,0)\big)$ in the energy space $H^1\times H^1 \times L^2$ with damping coefficients $\gamma, \delta > 0$ and forcing terms $f, g \in H^1$. In the following, $U(t)$ will denote the evolution operator corresponding to \eqref{eq:tKGS}. Note that the notion of a global attractor is only reasonable when the system is globally well-posed. For the forced and weakly damped system, global well-posedness holds in the energy space $H^1 \times H^1 \times L^2$ by a minor modification of the nondissipative local theory arguments together with decay of the Hamiltonian energy (see \cite{ET2} for details). Before stating the result, we give some definitions. 

\begin{definition}[\cite{Tem}] \label{ga} A compact subset $\mathcal{A}$ of the phase space $H$ is called a global attractor for the semigroup $\{U(t)\}_{t \geq 0}$ if $\mathcal{A}$ is invariant under the flow of $U$ and 
\begin{align*}
\lim_{t \to \infty} d(U(t)u_0 ,\mathcal{A}) = 0 \text{  for every  } u_0 \in H.
\end{align*}
\end{definition}

Using energy estimates, it can be shown that all solutions eventually enter a bounded subset of $H^1 \times H^1 \times L^2$. Such a set is called an absorbing set for the evolution $U(t)$:

\begin{definition}[\cite{Tem}] \label{abset} A bounded subset $\mathcal{B}_0$ of $H$ is called absorbing if for any bounded $\mathcal{B} \subset H$, there exists a time $T = T(\mathcal{B})$ such that $U(t)\mathcal{B} \subset \mathcal{B}_0$ for all $t \geq T$. 
\end{definition}

The global attractor will be the $\omega$-limit set of $\mathcal{B}_0$, which is defined by 
\[ \omega(\mathcal{B}_0)  = \bigcap_{s \geq 0} \overline{\bigcup_{t \geq  s} U(t)\mathcal{B}_0}. \]
Notice that it is immediate that the existence of a global attractor implies the existence of an absorbing set. The converse does not hold, though; an absorbing set may not be invariant under the flow and need not be compact. A partial converse is true, however, and will be used to show that the $\omega$-limit set is indeed a global attractor. 

\begin{thm}[\cite{Tem}] \label{asymthrm}
Let $H$ be a metric space and $U(t)$ be a continuous semigroup from $H$ to itself for all $t \geq 0$. Assume that there is an absorbing set $\mathcal{B}_0$. If the semigroup $\{U(t)\}_{t \geq 0}$ is asymptotically compact, i.e. for every bounded sequence $\{x_k\} \subset H$ and every sequence $t_k \to \infty$, the set $\{U(t_k)x_k\}_k$ is relatively compact in $H$, then $\omega (\mathcal{B}_0)$ is a global attractor. 
\end{thm}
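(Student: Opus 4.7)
The plan is to verify the three defining properties of a global attractor for the candidate set $\omega(\mathcal{B}_0)$: compactness, strict invariance under the flow, and attraction of every point in $H$. The essential tool throughout is the sequential description of the $\omega$-limit set derived from the definition as a nested intersection: $y \in \omega(\mathcal{B}_0)$ if and only if there exist $x_k \in \mathcal{B}_0$ and $t_k \to \infty$ with $U(t_k)x_k \to y$. Combined with asymptotic compactness, this reduces every claim to a subsequence extraction.

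For compactness, given $\{y_n\} \subset \omega(\mathcal{B}_0)$, I would choose $x_n \in \mathcal{B}_0$ and $t_n \geq n$ with $d(U(t_n)x_n, y_n) < 1/n$. Asymptotic compactness applied to the bounded sequence $\{x_n\} \subset \mathcal{B}_0$ yields a convergent subsequence of $\{U(t_n)x_n\}$, and the triangle inequality transfers convergence to $\{y_n\}$. The limit lies in $\omega(\mathcal{B}_0)$ because the nested intersection defining the set is an intersection of closed sets.

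For invariance, the forward inclusion $U(t)\omega(\mathcal{B}_0) \subset \omega(\mathcal{B}_0)$ follows from the identity $U(t+t_k)x_k = U(t)(U(t_k)x_k)$ and continuity of $U(t)$. The reverse inclusion is the delicate point: given $y = \lim U(t_k)x_k \in \omega(\mathcal{B}_0)$ with $t_k \to \infty$, I would apply asymptotic compactness to the time-shifted sequence $U(t_k - t)x_k$ (which is defined once $t_k \geq t$), extracting a subsequential limit $z$. That $z$ belongs to $\omega(\mathcal{B}_0)$ by the sequential description, and continuity of $U(t)$ along the same subsequence gives $U(t)z = \lim U(t_k)x_k = y$.

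For attraction, fix $u_0 \in H$. The absorbing property furnishes $T$ with $U(T)u_0 \in \mathcal{B}_0$. If $d(U(t)u_0, \omega(\mathcal{B}_0))$ did not tend to $0$, there would be $\epsilon > 0$ and $t_n \to \infty$ with $d(U(t_n)u_0, \omega(\mathcal{B}_0)) \geq \epsilon$; but writing $U(t_n)u_0 = U(t_n - T)(U(T)u_0)$ and invoking asymptotic compactness with the bounded one-point sequence $\{U(T)u_0\} \subset \mathcal{B}_0$ produces a subsequential limit which, by the sequential characterization, lies in $\omega(\mathcal{B}_0)$, contradicting the separation. The main obstacle is the reverse-invariance step, which requires the shift-and-compactness trick combined with continuity of the fixed map $U(t)$; the other two items are essentially direct consequences of the sequential description of $\omega(\mathcal{B}_0)$.
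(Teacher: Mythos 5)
Your proposal is correct: the sequential characterization of $\omega(\mathcal{B}_0)$, the subsequence extractions via asymptotic compactness for compactness and for both inclusions of invariance, and the contradiction argument for attraction (after absorbing the singleton $\{u_0\}$ into $\mathcal{B}_0$) all check out, and nonemptiness of $\omega(\mathcal{B}_0)$ comes for free from the same extractions. Note that the paper itself gives no proof of this statement --- it is quoted from Temam \cite{Tem} --- and your argument is essentially the standard one found there, so there is nothing further to compare.
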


We will prove asymptotic compactness using a smoothing estimate for the dissipative system, yielding the following result. 

\begin{thm} \label{GAthrm}
The Klein-Gordon-Schr\"{o}dinger evolution in dimensions $d=2,3$ has a global attractor in $H^1 \times H^1 \times L^2$ which is compact in $H^{\frac32 -} \times H^{3-} \times H^{2-}$.   
\end{thm}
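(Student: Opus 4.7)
The approach is to verify the hypotheses of Theorem \ref{asymthrm} via a linear/nonlinear decomposition controlled by the smoothing estimate. The first step is to construct an absorbing set $\mathcal{B}_0 \subset H^1 \times H^1 \times L^2$; this follows from standard energy methods using the $L^2$ dissipation of $u$ (from $\gamma > 0$) together with a damped-Hamiltonian identity for the wave part, accounting for the forcing $f,g \in H^1$.

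The second step is to extend Theorem \ref{KGSthm} to the damped, forced evolution \eqref{eq:dKGS}. The damping terms $i\gamma u$ and $\delta v_t$ contribute only bounded exponential-in-$t$ factors in the Duhamel representation and do not affect the bilinear $X^{s,b}$ estimates of Propositions \ref{schrodinger_est} and \ref{kg_est}; the forcing $f,g \in H^1$ enters linearly and is absorbed into the ``linear'' part of the Duhamel splitting. Taking $(s,r) = (1,1)$ in Theorem \ref{KGSthm} yields smoothing indices $\alpha < \frac12$ and $\beta < 2$, exactly the gain needed to reach the target space. Iterating on short intervals produces a uniform-in-$t$ bound on the nonlinear Duhamel piece in $H^{3/2-} \times H^{3-} \times H^{2-}$ for orbits starting in $\mathcal{B}_0$.

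Next I verify asymptotic compactness. For bounded $\{x_k\} \subset H^1 \times H^1 \times L^2$ and $t_k \to \infty$, write $U(t_k) x_k = L(t_k) x_k + N(t_k) x_k$, with $L$ the damped linear flow and $N$ the Duhamel remainder. The damping gives $\|L(t_k) x_k\|_{H^1\times H^1 \times L^2} \lesssim e^{-ct_k}\|x_k\| \to 0$, while $\{N(t_k) x_k\}$ is bounded in $H^{3/2-} \times H^{3-} \times H^{2-}$ by the smoothing step. On the non-compact domain $\mathbb{R}^d$, higher regularity alone does not yield energy-space compactness, so I need an additional uniform spatial tail estimate: localizing the Duhamel integral to $|x| > R$, splitting the time integral into an old (exponentially damped) part and a recent (short, energy-controlled) part, and using the $H^1$ regularity of $f,g$, the tail in the energy norm can be made small uniformly in $k$. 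Combined with Rellich--Kondrachov on $\{|x| \leq R\}$ this produces precompactness in $H^1 \times H^1 \times L^2$.

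Theorem \ref{asymthrm} then identifies $\mathcal{A} := \omega(\mathcal{B}_0)$ as a global attractor. Orbits in $\mathcal{A}$ arise as images of bounded orbits under $U(t)$ for arbitrary $t$, so the same smoothing-plus-tail argument applied to $\mathcal{A}$ itself yields compactness in $H^{3/2-} \times H^{3-} \times H^{2-}$. The principal obstacle is the spatial tail estimate: the new smoothing bound supplies the frequency-side compactness for free, but controlling mass escape to infinity on $\mathbb{R}^d$ still requires a delicate exploitation of the damping and the regularity of the forcing. This is precisely the step that the truncation procedure of \cite{LW} was designed to handle and which the smoothing framework is meant to streamline.
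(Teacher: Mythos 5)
Your setup is on the right track: the absorbing ball, extending the smoothing estimate of Theorem \ref{KGSthm} to the damped and forced system \eqref{eq:dKGS}, and the choice $(s,r)=(1,1)$ with gains $\alpha<\tfrac12$, $\beta<2$ all match the paper. But your mechanism for asymptotic compactness is wrong in spirit, and it contains a genuine gap. You reach the point where $\{N(t_k)x_k\}$ is bounded in the smoother space $H^{3/2-}\times H^{3-}\times H^{2-}$ and then try to convert this into precompactness in the \emph{energy space} by adding a uniform spatial tail estimate and invoking Rellich--Kondrachov on balls. That tail estimate is exactly the hard part of the original argument of \cite{LW}; as you yourself note, you have not supplied it, and the paper's whole point is to \emph{avoid} such truncation/tail arguments. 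Reintroducing them defeats the purpose and leaves the proposal incomplete, since gaining regularity alone does nothing for decay at spatial infinity on $\R^d$.

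The paper's actual route to asymptotic compactness is the energy-equation method: one extracts a subsequence so that $\S(t_n)x_n \rightharpoonup (u,v,w)$ weakly, and shows via the $L^2$ identity and the Hamiltonian-type energy identity that the \emph{norms} also converge, $\|\S(t_n)x_n\| \to \|(u,v,w)\|$ in $H^1\times H^1\times L^2$; weak convergence together with norm convergence in a Hilbert space yields strong convergence, with no localization or Rellich--Kondrachov step at all. The role of the smoothing estimate is different from what you propose: it feeds into Lemma \ref{weakcont} (weak continuity of the nonlinear semigroup), gives uniform $H^{3/2-}\times H^{3-}\times H^{2-}$ bounds on the nonlinear part that let one pass to limits in the cubic energy term using the embedding $H^{3-}\hookrightarrow L^\infty$, and yields the extra compactness claim in the higher-regularity space by interpolating the $H^1\times H^1\times L^2$ compactness of $\mathcal{A}$ against its uniform higher-regularity bound. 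Your proposal would need to be reworked around this energy-equation argument (or to actually carry out the tail estimate, which is the precise difficulty the smoothing framework is designed to bypass).
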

The existence of a global attractor is known \cite{LW}. However, the compactness statement appears to be new. We remark that the existence of a global attractor for the dissipative Zakharov system (without a mass term) on Euclidean spaces appears to be an interesting open problem. The methods we use cannot be applied to the Zakharov because of difficulties in controlling the low-frequency components of the wave equation. We also remark that our proof method also applies to \eqref{eq:dKGS} with forcing $f, g \in H^{-\frac12 +}$. In this case, we obtain a global attractor which is compact in $H^{\frac32 -} \times H^{\frac32+} \times H^{\frac12+}$. 

As a second application, we use a variant of the high-low decomposition method together with the smoothing estimate to obtain global existence for the Klein-Gordon-Schr\"{o}dinger equation in four dimensions. 

\begin{thm}
The Klein-Gordon-Schr\"{o}dinger evolution \eqref{eq:tKGS} is globally well-posed on $H^s \times H^r$ for $s,r > 9/10$ as long as  $\|u_0\|_{L^2} < \sqrt{2} C_1 C_2^2$, where $C_1$ and $C_2$ are the optimal constants in the four-dimensional $L^4$ and $L^{8/3}$ Gagliardo-Nirenberg-Sobolev inequalities respectively.
\end{thm}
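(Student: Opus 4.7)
The plan is to execute a Bourgain-type high-low frequency decomposition, using the smoothing bounds of Theorem~\ref{KGSthm} to close each iteration step. The first ingredient is an a priori energy bound. Since $\|u(t)\|_{L^2}$ is conserved by \eqref{eq:tKGS}, Hölder together with the four-dimensional Sobolev embedding $\|v\|_{L^4} \lesssim \|\nabla v\|_{L^2}$ and the Gagliardo--Nirenberg inequality $\|u\|_{L^{8/3}}^2 \lesssim \|\nabla u\|_{L^2}\|u\|_{L^2}$ give
\[
\Big|\int |u|^2 v\d x\Big| \leq \|u\|_{L^{8/3}}^2 \|v\|_{L^4} \lesssim \|u\|_{L^2}\|\nabla u\|_{L^2}\|\nabla v\|_{L^2},
\]
with sharp constant $(C_1 C_2^2)^{-1}$ in the final inequality. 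A weighted Cauchy--Schwarz then absorbs this into $\|\nabla u\|_{L^2}^2 + \tfrac12\|\nabla v\|_{L^2}^2$ with strictly positive remainder precisely when $\|u_0\|_{L^2} < \sqrt{2}\, C_1 C_2^2$; the factor $\sqrt{2}$ is the sharp threshold produced by optimising the weighted estimate. Consequently every $H^1 \times H^1$ energy-space solution of \eqref{eq:tKGS} whose data satisfies the hypothesis is global, with a uniform Hamiltonian bound on its energy-space norm.

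Fix $T > 0$ and a dyadic cutoff $N \gg 1$, and decompose $u_0 = u_0^\ell + u_0^h := P_{\leq N}u_0 + P_{>N}u_0$, with the analogous splitting $v_0^\pm = v_0^{\ell,\pm} + v_0^{h,\pm}$. The low-frequency pieces satisfy $\|u_0^\ell\|_{H^1} \lesssim N^{1-s}\|u_0\|_{H^s}$ and $\|v_0^{\ell,\pm}\|_{H^1} \lesssim N^{1-r}\|v_0^\pm\|_{H^r}$, while $\|u_0^\ell\|_{L^2} \leq \|u_0\|_{L^2}$ retains the smallness hypothesis. Let $(U(t), V^\pm(t))$ be the global KGS evolution of this data, which exists in $H^1 \times H^1$ by the previous paragraph. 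The remainder $(w, z^\pm) := (u - U, v^\pm - V^\pm)$ satisfies the inhomogeneous system
\begin{align*}
iw_t + \Delta w &= -\tfrac12\bigl[U(z^+ + z^-) + w(V^+ + V^-) + w(z^+ + z^-)\bigr],\\
iz^\pm_t \mp A z^\pm &= \mp A^{-1}\bigl[U\bar w + \bar U w + |w|^2\bigr],
\end{align*}
with initial data $(u_0^h, v_0^{h,\pm})$ of bounded $H^s \times H^r$ norm and of $L^2$ norm $\lesssim N^{-s}$.

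On a local interval $[0, \delta]$ whose length depends only on the $H^s \times H^r$ data norms (independent of $N$), apply Proposition~\ref{schrodinger_est} and Proposition~\ref{kg_est} (the latter via polarisation to cover the cross terms $U\bar w$ and $\bar U w$) to each bilinear term in the error system. The assumption $s, r > 9/10$ permits choices $\alpha \in (1-s, 1/2)$ and $\beta \in (1-r, \min\{2s-r+1, s-r+2\})$, forcing $s+\alpha > 1$ and $r+\beta > 1$. Hence the Duhamel corrections $w(\delta) - e^{i\delta\Delta} u_0^h$ and $z^\pm(\delta) - e^{\mp i\delta A} v_0^{h,\pm}$ belong to $H^1$, with norms controlled by a polynomial in $\|u_0\|_{H^s}$ and $\|v_0\|_{H^r}$ (no positive powers of $N$ appear, because the $X^{s,b}$ norms of $U$, $V^\pm$ control only $\|U_0\|_{H^\sigma}$ and $\|V_0^\pm\|_{H^\sigma}$ at indices $\sigma \leq s, r$). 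Reassemble at time $\delta$: define the new low-frequency piece as $U(\delta) + [w(\delta) - e^{i\delta\Delta} u_0^h]$ (still in $H^1$, energy increased by a bounded amount) and the new high-frequency piece as $e^{i\delta\Delta} u_0^h$ (preserving its $H^s$ norm and its $L^2$ norm $\lesssim N^{-s}$). Iterating over $\lfloor T/\delta \rfloor$ successive intervals and choosing $N$ polynomially large in $T$, the accumulated energy remains within the allocation $\sim N^{2(1-s)} + N^{2(1-r)}$ while the $L^2$ mass of the evolving low-frequency piece stays below $\sqrt{2}\, C_1 C_2^2$. This yields uniform $H^s \times H^r$ control of $(u, v^\pm)$ on $[0, T]$, and global well-posedness follows since $T$ was arbitrary.

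The principal technical difficulty lies in the quantitative bookkeeping: one must check that the per-step $H^1$ growth coming from the bilinear estimates, multiplied by the number of steps $\lfloor T/\delta \rfloor$, is absorbed by a single choice of $N = N(T)$, and simultaneously that the small $L^2$ perturbations added at each step do not push the low-frequency $L^2$ mass past the coercivity threshold. The specific regularity bound $s, r > 9/10$ reflects the quantitative balance between the available four-dimensional smoothing indices and the iteration growth rate; any narrower gap between $\alpha$ (resp.\ $\beta$) and $1 - s$ (resp.\ $1 - r$) would demand a stricter lower bound on $s, r$ for the iteration to close for every $T$.
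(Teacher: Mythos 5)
Your high-level strategy is the same as the paper's: decompose data at a dyadic threshold $N$, solve the low-frequency piece globally in $H^1$ via the (small-mass) coercive Hamiltonian, and apply the bilinear smoothing estimates to push the nonlinear part of the high-frequency remainder into $H^1$ at each local step. However, several quantitative claims in your sketch are wrong, and they are precisely the ones from which the threshold $s,r > 9/10$ emerges, so the argument as written cannot close.

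First, a small matter: the sharp constant in the chain
$\bigl|\int |u|^2 v\,\d x\bigr| \leq \|u\|_{L^{8/3}}^2\|v\|_{L^4} \leq C \|u\|_{L^2}\|\nabla u\|_{L^2}\|\nabla v\|_{L^2}$
is $C = C_1 C_2^2$, not $(C_1C_2^2)^{-1}$. The coercivity threshold that results is $\|u_0\|_{L^2} < \sqrt2/(C_1C_2^2)$, which is what the paper's proof actually uses (the displayed theorem statement contains a reciprocal-placement typo; you should flag the inconsistency rather than invent an incorrect constant to match it).

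More seriously, the claim that the local existence time $\delta$ ``depends only on the $H^s\times H^r$ data norms (independent of $N$)'' is false. The low-frequency solution $(U,V^\pm)$ is controlled in $X^{1,b}\times X^{1,b}_\pm$, and the local time step for that system scales like a negative power of the $H^1\times H^1$ data, which is $\lesssim N^{1-m}$, $m=\min\{s,r\}$. The paper takes $\delta \lesssim N^{-2(1-m)/r_0 -}$ with $r_0 = 1/2+$, so $\delta \sim N^{-4(1-m)-}$, and the number of steps needed to cover $[0,T]$ grows like $N^{4(1-m)+}$. Your parenthetical assertion that ``no positive powers of $N$ appear, because the $X^{s,b}$ norms of $U,V^\pm$ control only $\|U_0\|_{H^\sigma}$ at indices $\sigma\leq s,r$'' is also wrong for the same reason: the cross terms in the remainder system are estimated against the $X^{1,b}$ norms of $U,V^\pm$, which carry the factor $N^{1-m}$. (The per-step $H^1$ increment is $\lesssim N^{3/2-2m+}$, which is indeed a \emph{negative} power of $N$ once $m>3/4$ --- but this is because the high-frequency $X^{s_0,b}$ norm decays, not because the low-frequency norms are $N$-independent.)

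The exponent $9/10$ is exactly the point where the three $N$-powers balance: energy increment per step $\sim N^{3/2-2m+}\cdot N^{1-m}$, number of steps $\sim N^{4(1-m)+}$, available energy budget $\sim N^{2(1-m)}$, giving $13/2-7m < 2-2m$, i.e.\ $m>9/10$. Because your sketch suppresses the $N$-dependence of $\delta$ and of the low-frequency $X^{1,b}$ norms, it cannot produce this balance and so cannot justify the stated threshold. The iteration scheme and the idea of reassembling the nonlinear part into the low-frequency component are exactly as in the paper; what is missing is the correct bookkeeping of $N$-powers, which is the substance of the theorem.
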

The constraint on the norm of the $u_0$ is necessary to ensure that the energy functional is positive definite. The optimal constants in the Gagliardo-Nirenburg-Sobolev inequalities have been established by Weinstein \cite{W}. The proof of this result is in Section \ref{R4Proof}. 

\section{Proof of Theorems \ref{Zthm} \& \ref{KGSthm}}\label{smoothingProof}

In this section, we give the proof of the smoothing theorem for the Klein-Gordon-Schr\"{o}dinger flow. The proof for the Zakharov equation has the same structure; it is obtained by adding two derivatives to the wave nonlinearity which appears in the Klein-Gordon-Schr\"{o}dinger system. Since the calculations for the Zakharov equation are similar, they are omitted. 

Writing the solution to the transformed Klein-Gordon-Schr\"{o}dinger equation \eqref{eq:tKGS} in its Duhamel form yields
\begin{align*}
u(t) - e^{it\Delta}u_0 =-\frac12  \int_0^t e^{i(t-t')\Delta}\Big( u(v^+ + v^-)\Big) \d t' \\
v^\pm(t) - e^{\mp i t A} v^\pm_0 = \mp \int_0^t e^{\mp i (t-t') A} \Big( A^{-1}|u|^2\Big) \d t'.
\end{align*}
Let $\delta$ be the local existence time of the solution. Then on $[0,\delta]$ we have 
\begin{equation} \label{eq:localBound}
\|u\|_{X^{s,b}_\delta}  +  \|v^\pm \|_{X^{r,b}_{\pm,\delta}} \lesssim \|u_0\|_{H^s} + \|v^\pm_0\|_{H^r}.
\end{equation}

To control the Duhamel integral terms, we use the embeddings $X^{s,b} \hookrightarrow C^0H^{s}$ and $X^{r , b}_{\pm} \hookrightarrow C^0H^{r}$, which hold for $b > \frac12$,  along with following standard lemma. 
\begin{lem}[\cite{GTV}]
For $b \in (\frac12, 1]$, we have
\begin{align*}
\left\| \int_0^t e^{i(t-t')\Delta}F(t') \d t' \right\|_{X^{s,b}_\delta} &\lesssim  \|F\|_{X^{s,b-1}_{\delta}} \\
\left\|\int_0^t e^{\mp i (t-t')A}F(t') \d t' \right\|_{X^{r,b}_{\pm, \delta}} &\lesssim \|F\|_{X^{r,b-1}_{\pm,\delta}}. 
\end{align*}
\end{lem}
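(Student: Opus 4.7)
The plan is to prove the Schr\"odinger bound in detail; the Klein-Gordon bound follows identically after replacing the symbol $-|\xi|^2$ by $\mp|\xi|$. Fixing $b\in(\tfrac12,1]$, I would first use the definition of the restricted norm to pick an extension $\tilde F$ of $F$ to $\R\times \R^d$ with $\|\tilde F\|_{X^{s,b-1}}\le 2\|F\|_{X^{s,b-1}_\delta}$, and choose $\eta\in C_c^\infty(\R)$ with $\eta\equiv 1$ on $[-1,1]$ and $\operatorname{supp}\eta\subset[-2,2]$, setting $\eta_\delta(t)=\eta(t/\delta)$. Since multiplication by $\eta_\delta$ is bounded on $X^{s,b}$ and $\eta_\delta\equiv1$ on $[-\delta,\delta]$, it suffices to prove the unrestricted bound
\[
\bigl\|\eta_\delta(t)\!\int_0^t e^{i(t-t')\Delta}\tilde F(t')\,dt'\bigr\|_{X^{s,b}}\lesssim \|\tilde F\|_{X^{s,b-1}}.
\]

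Next I would unwind the integrand on the Fourier side. Writing $\tilde F(x,t')=\iint e^{ix\cdot\xi}e^{it'\tau}\widehat{\tilde F}(\xi,\tau)\,d\xi\,d\tau$ and carrying out the $t'$ integration explicitly yields the identity
\[
\int_0^t e^{i(t-t')\Delta}\tilde F(t')\,dt' = \iint e^{ix\cdot\xi}\,\frac{e^{it\tau}-e^{-it|\xi|^2}}{i(\tau+|\xi|^2)}\,\widehat{\tilde F}(\xi,\tau)\,d\xi\,d\tau.
\]
Setting $\mu:=\tau+|\xi|^2$, I would split the $\tau$-integral into a \emph{high-modulation} region $|\mu|\ge 1/\delta$ and a \emph{low-modulation} region $|\mu|<1/\delta$. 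On the high-modulation region the factor $\mu^{-1}$ provides decay: the $e^{it\tau}$ piece is controlled directly because multiplication by $\mu^{-1}\chi_{|\mu|\gtrsim 1}$ maps $X^{s,b-1}$ boundedly into $X^{s,b}$, while the $e^{-it|\xi|^2}$ piece, after multiplication by $\eta_\delta$, is a localized linear Schr\"odinger flow whose data has $H^s$-norm controlled by $\|\tilde F\|_{X^{s,b-1}}$ via Cauchy-Schwarz in $\tau$ (the weight $\int_{|\mu|\ge 1/\delta}\mu^{-2}\langle\mu\rangle^{2(1-b)}\,d\mu$ is finite precisely because $b>\tfrac12$).

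On the low-modulation region I would Taylor expand
\[
\frac{e^{it(\tau+|\xi|^2)}-1}{\tau+|\xi|^2} = \sum_{n=1}^\infty \frac{(it)^n(\tau+|\xi|^2)^{n-1}}{n!},
\]
absorb each $t^n$ factor into $\eta_\delta(t)$, and reduce the $n$-th term to a cutoff linear flow $\eta_\delta(t)t^n\,e^{it\Delta}G_n$ for an appropriate $G_n$. Separating time from space gives $\|\eta_\delta t^n e^{it\Delta} G_n\|_{X^{s,b}}\lesssim \|\eta_\delta t^n\|_{H^b_t}\|G_n\|_{H^s_x}$, where the first factor decays rapidly in $n$ by the Schwartz regularity of $\eta_\delta$, and the second is bounded by $C^n\|\tilde F\|_{X^{s,b-1}}$ via Cauchy-Schwarz on the bounded modulation strip. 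The main technical obstacle is this combinatorial bookkeeping in the Taylor step: one must verify uniform convergence of the series in $X^{s,b}$ and keep track of $\delta$-dependent constants, after which summing the geometric series yields the claim.
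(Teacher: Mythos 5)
The paper does not prove this lemma at all---it is quoted from \cite{GTV}---and your sketch reproduces that standard argument (extend $F$, multiply by a time cutoff, pass to the Fourier side, split at modulation $1/\delta$, Taylor-expand the low-modulation piece, and use Cauchy--Schwarz in $\tau$, which is exactly where $b>\tfrac12$ enters), so it is correct in outline. One caution: the claim ``multiplication by $\eta_\delta$ is bounded on $X^{s,b}$'' is not true uniformly in $\delta$ when $b>\tfrac12$ (the operator norm grows like $\delta^{\frac12-b}$); the reduction to the unrestricted estimate needs only that $\eta_\delta\equiv 1$ on $[-\delta,\delta]$, and in the high-modulation term you must either track the cancelling powers of $\delta$ carefully as in \cite{GTV} or, more simply, take a unit-scale cutoff and split at modulation $\sim 1$, which suffices here since the lemma claims no gain in $\delta$.
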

Using these estimates yields
\begin{align*}
\| u(t) - e^{it\Delta}u_0 \|_{L^\infty_{[0,\delta]}H^{s+ \alpha}} &\lesssim \|u v^+\|_{X^{s+\alpha, b-1}_\delta} + \|u v^-\|_{X^{s+\alpha, b-1}_\delta}\\
\|  v^\pm(t) - e^{\mp i t A} v^\pm_0\|_{L^\infty_{[0,\delta]}H^{r+ \beta}} &\lesssim \|  A^{-1}|u|^2 \|_{X^{r + \beta, b-1}_{\pm, \delta}}.
\end{align*}
Using the estimates from Propositions \ref{schrodinger_est} and \ref{kg_est}, we have 
\begin{align*}
\| u(t) - e^{it\Delta}u_0 \|_{L^\infty_{[0,\delta]}H^{s+ \alpha}} &\lesssim \|u\|_{X^{s,b}_{\delta}} \Big(\|v^+\|_{X^{r,b}_{+, \delta}} + \| v^-\|_{X^{r,b}_{-, \delta}}\Big)\\
\|  v^\pm(t) - e^{\mp i t A} v^\pm_0\|_{L^\infty_{[0,\delta]}H^{r+ \beta}} &\lesssim \| u\|^2_{X^{s}_{\delta}}.
\end{align*}
Using the local theory bound \eqref{eq:localBound}, we conclude
\begin{align*}
\| u(t) - e^{it\Delta}u_0 \|_{L^\infty_{[0,\delta]}H^{s+ \alpha}} &\lesssim \Big( \|u_0\|_{H^s} + \|v^\pm_0\|_{H^r} \Big)^2\\
\|  v^\pm(t) - e^{\mp i t A} v^\pm_0\|_{L^\infty_{[0,\delta]}H^{r+ \beta}} & \lesssim \Big( \|u_0\|_{H^s} + \|v^\pm_0\|_{H^r} \Big)^2.
\end{align*}
Repeating this process shows that the nonlinear part of the solution remains in $H^{s + \alpha} \times H^{r + \beta}$ for the full interval of existence. 

To prove continuity, write
\begin{align*}
 &\Big( u(t) - e^{-t\Delta}u_0 \Big)  -\Big(u(t + \epsilon) - e^{i(t+ \epsilon)\Delta}u_0 \Big) \\
&= \frac12  \int_0^{t+\epsilon} e^{i(t+\epsilon-t')\Delta}\Big( u(v^+ + v^-)\Big) \d t' - \frac12 \int_0^t e^{i(t-t')\Delta}\Big( u(v^+ + v^-)\Big) \d t' \\
&= \frac12 \big( e^{i\epsilon \Delta} - \operatorname{Id}\big)  \int_0^t e^{i(t-t')\Delta}\Big( u(v^+ + v^-)\Big) \d t' + \frac12  \int_t^{t+\epsilon} e^{i(t+\epsilon-t')\Delta}\Big( u(v^+ + v^-)\Big) \d t'
\end{align*}
The continuity follows by applying the estimates stated previously along with the continuity of $(u, v^\pm)$ in $H^s \times H^r$; see \cite{ET1}. Continuity of the nonlinear part of $v$ is proved in the same way.

\section{Proof of the Existence of a Global Attractor}\label{gaProof}

In this section, we use smoothing estimates to simplify the proof of the existence of a global attractor for the dissipative Klein-Gordon-Schr\"{o}dinger flow in two and three dimensions. To prove this result, we need to establish boundedness and asymptotic compactness of the flow. The boundedness follows from the energy equation; compactness is the challenging part. To prove this, we use boundedness to obtain a weakly convergent sequence of solutions. The energy equation is used to upgrade the weak convergence to strong convergence, yielding the desired compactness. The energy functional contains cubic terms which can easily be bounded using our smoothing result and the embedding $H^{\frac32+} \hookrightarrow L^{\infty}$. In the existing proof, an extensive argument, involving uniform estimates of the solution restricted to compact sets, is required to control these terms. 

First we establish a weak continuity result for the evolution operator which will be needed to work with the energy equations. A slightly weaker form of the following lemma is in \cite[Lemma 3.1]{LW}. 

\begin{lem}\label{weakcont}
 Let $d = 2,3$. Let $\S(t)$ denote the semigroup operator for \eqref{eq:dKGS}, and let $L(t)$ denote the linear part of the semigroup operator. If $(u_0^n,v_0^n,w_0^n) \rightharpoonup (u_0,v_0,w_0)$ weakly in $H^1 \times H^1 \times L^2$, then for any $T >0$,  
 \begin{align*}
  L(t)(u_0^n,v_0^n,w_0^n) &\rightharpoonup L(t)(u_0,v_0,w_0) \quad\text{weakly in}\quad L^2([0,T], H^1 \times H^1 \times L^2) \\
  \bigl[\S(t) - L(t) \bigr] (u_0^n,v_0^n,w_0^n) &\rightharpoonup \bigl[\S(t) - L(t)\bigr] (u_0,v_0,w_0) \quad\text{weakly in}\quad L^2([0,T], H^{\frac32 - } \times H^{3-} \times H^{2-}). 
 \end{align*}
 Furthermore, we have pointwise weak convergence: for any $t \in [0,T]$, 
 \begin{align*}
  L(t)(u_0^n,v_0^n,w_0^n) &\rightharpoonup  L(t)(u_0,v_0,w_0) \quad \text{weakly in} \quad H^1 \times H^1 \times L^2 \\
  \bigl[\S(t)-L(t) \bigr](u_0^n,v_0^n,w_0^n) &\rightharpoonup \bigl[ \S(t)-L(t) \bigr] (u_0,v_0,w_0) \quad\text{weakly in}\quad H^{\frac32 - } \times H^{3-} \times H^{2-}. 
 \end{align*}
\end{lem}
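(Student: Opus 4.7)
The plan is to split into linear and nonlinear parts, relying on boundedness of the linear semigroup $L(t)$ for the easy piece and on the smoothing estimate of Theorem \ref{KGSthm} for the hard piece, which I would first adapt to the damped and forced system \eqref{eq:dKGS} (this adaptation is routine: damping only improves decay and the forcing $f,g \in H^1$ is absorbed into the Duhamel integral). Weak convergence of the initial data supplies a uniform bound $\sup_n \|(u_0^n, v_0^n, w_0^n)\|_{H^1 \times H^1 \times L^2} \leq M < \infty$. For the linear part, $L(t)$ is bounded on $H^1 \times H^1 \times L^2$ with operator norm uniform in $t \in [0,T]$, so for each fixed $t$ and each test vector $\phi$, duality gives $\langle L(t)(u_0^n,v_0^n,w_0^n), \phi\rangle = \langle (u_0^n,v_0^n,w_0^n), L(t)^* \phi\rangle \to \langle L(t)(u_0,v_0,w_0), \phi\rangle$; the $L^2_t$ weak convergence then follows by dominated convergence, with $M\|L(t)^*\phi\|$ as the dominating function.

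For the nonlinear part, the (dissipative) smoothing estimate gives the uniform bound
\[ \sup_n \sup_{t \in [0,T]} \bigl\|[\S(t) - L(t)](u_0^n, v_0^n, w_0^n)\bigr\|_{H^{3/2-} \times H^{3-} \times H^{2-}} \lesssim C(M,T). \]
By Banach-Alaoglu a subsequence converges weakly in $L^2([0,T], H^{3/2-} \times H^{3-} \times H^{2-})$ to some limit $\Psi$, and the task is to identify $\Psi$ with $[\S(t) - L(t)](u_0,v_0,w_0)$ by passing to the limit in the Duhamel representation. The uniform $L^\infty_t H^1$ bound on $(u_n, v_n)$ (from the linear bound combined with smoothing) together with uniform control on $\partial_t(u_n, v_n)$ in a negative-index Sobolev space (read off the equations \eqref{eq:dKGS}) permits an Aubin-Lions argument yielding strong convergence of $(u_n, v_n)$ in $L^2_t L^2_{\text{loc}}$. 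Combined with spatial cutoffs whose tails are controlled via the smoother norm on the nonlinear part, this allows passage to the limit in the quadratic nonlinearities. Uniqueness of the weak limit then upgrades subsequential convergence to full sequential convergence, and the pointwise weak statement at fixed $t$ follows by the same argument plus equicontinuity in $t$ supplied by the Duhamel identity.

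The main obstacle is precisely the passage to the limit in the bilinear products $u_n v_n$ and $|u_n|^2$. Weak $H^1$ convergence alone does not give convergence of products, and the noncompactness of $\R^d$ blocks a direct application of Rellich-Kondrachov. The smoothing estimate is essential here: it provides the extra regularity needed to localize integrals on large balls, where Aubin-Lions and local Rellich apply, while simultaneously controlling the spatial tails via the smoother norm. This is the precise point where smoothing replaces the truncation-based compactness argument of \cite{LW}, and it is what streamlines the existence proof for the global attractor.
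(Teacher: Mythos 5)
The overall architecture of your argument matches the paper's: split into linear and nonlinear parts, use the smoothing estimate to get uniform bounds on the nonlinear part in the smoother space, combine a uniform bound on time derivatives (read off the equation) with local Rellich compactness (your Aubin--Lions versus the paper's Arzel\`a--Ascoli plus interpolation---these are the same mechanism here) to obtain strong local convergence, then pass to the limit in the quadratic terms by testing against compactly supported $\phi$, and finally identify the limit via uniqueness. That part is sound.

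The genuine gap is in the parenthetical ``this adaptation is routine: damping only improves decay and the forcing $f,g \in H^1$ is absorbed into the Duhamel integral.'' This does not work as stated. The standard $X^{s,b}$ Duhamel estimate gives no regularity gain over the forcing: if $f \in H^1$ only, then its time-localized extension is in $X^{1,b-1}$ and the Duhamel contribution lands in $X^{1,b}\hookrightarrow C_tH^1$, not in $H^{3/2-}$; similarly $A^{-1}g$ for $g\in H^1$ yields at best an $H^2$ contribution, not $H^{3-}$. So a direct application of your adapted smoothing estimate would fail to put $[\S(t)-L(t)](u_0^n,v_0^n,w_0^n)$ in the claimed spaces. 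The elliptic $(1-\Delta)^{-1}$ gain that makes the claim true comes from the \emph{time-independence} of $f,g$, which the $X^{s,b}$ Duhamel bound does not see. The paper handles this by changing variables, $\tilde{u}=u+(1-\Delta)^{-1}f$, $\tilde{v}=v-(1-\Delta)^{-1}g$, which replaces the $H^1$ source terms by $H^3$ ones, at the cost of a few extra bilinear interactions with the (now smooth and time-independent) profiles $(1-\Delta)^{-1}f$ and $(1-\Delta)^{-1}g$. You need this change of variables, or an explicit computation of $\int_0^t e^{(t-t')(i\Delta-\gamma)}f\,dt'=(e^{(i\Delta-\gamma)t}-1)(i\Delta-\gamma)^{-1}f$ and its Klein--Gordon analogue, to close the adaptation; calling it routine and folding $f,g$ into the ordinary Duhamel estimate leaves a hole at exactly the point the paper goes out of its way to fix. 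A lesser remark: controlling spatial tails via the smoother norm, as you suggest, is neither necessary nor effective (higher regularity gives no spatial decay); the compact support of the test function already localizes the passage to the limit, which is what the paper does.
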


\begin{proof}
 The statements for the linear part of the flow can be verified using the Fourier multiplier representation of the linear solutions. To work on the nonlinear part, we transform the equation \eqref{eq:dKGS}. Set $\tilde{f} = (1-\Delta)^{-1}f$ and $\tilde{g} = (1-\Delta)^{-1}g$. Let $\tilde{u} = u + \tilde{f}$ and $\tilde{v} = v - \tilde{g}$ with $w = a v + v_t$. We will choose $0< a \ll 1$ later. This transformation yields
\begin{equation}\label{eq:dampedtransf}
 \begin{cases}
i\tilde{u}_t + \Delta \tilde{u}+  i \gamma \tilde{u} = - (\tilde{u} - \tilde{f})(\tilde{v} + \tilde{g}) + (1 + i \gamma )\tilde{f}\\
\tilde{v}_t +  a \tilde{v} + a \tilde{g} =  w \\
w_t + (\delta - a) w + \Bigl(1 + a(a-\delta) - \Delta\Bigr)\tilde{v} = |\tilde{u} - \tilde{f}|^2 - a(a-\delta)\tilde{g}.
\end{cases}
\end{equation}
The transformation allows us to replace the $H^1$ forcing terms by $H^3$ forcing terms, in exchange for more complex nonlinearities. The introduction of $w$ is convenient for energy calculations. 

Consider the homogeneous linear system
\begin{equation} \label{eq:lin}
\begin{cases}
ip_t + \Delta p+  i \gamma p = 0\\
q_t + a q = r \\
r_t + (\delta - a) r + \Bigl(1 + a(a-\delta) - \Delta\Bigr)q = 0.
\end{cases}
\end{equation}
In the following, we abuse notation and let $L$ also denote the semigroup associated with this equation. The nonlinear parts $(U,V,W) = (\tilde{u}-p, \tilde{v}-q, w - r)$ satisfy
\begin{equation} \label{eq:nonlin}
\begin{cases}
iU_t + \Delta U +  i \gamma U = - (U + p - \tilde{f})(V + q + \tilde{g}) + (1+ i \gamma)\tilde{f}  \\
V_t + a V + a \tilde{g} = W \\
W_t + (\delta - a) W + \Bigl(1 + a(a-\delta) - \Delta\Bigr)V = |U + p - \tilde{f}|^2  - a(a-\delta) \tilde{g},
\end{cases}
\end{equation}
with zero initial data. 
Just as in Section \ref{smoothingProof}, we can use bilinear estimates, together with the smoothness of $\tilde{f}$ and $\tilde{g}$, to conclude that $(U, V, W)\in H^{\frac32 - } \times H^{3-} \times H^{2-}$ for initial data $H^1 \times H^1 \times L^2$ in dimensions $d = 2,3$. 

We will show that every subsequence of $\bigl[\S(t) - L(t) \bigr] (u_0^n, v_0^n, w_0^n)$ has a further subsequence which converges weakly to the solution of the KGS, which implies that the full sequence converges weakly to that solution. 

Note that the sequence $(u_0^n, v_0^n, w_0^n)$ is uniformly bounded in the energy space. Denote by $(p^n, q^n ,r^n)$ the solution to linear system \eqref{eq:lin} with initial data $(u_0^n,v_0^n,w_0^n)$. Let $(U^n,V^n,W^n)$ be the nonlinear part of the flow. For the nonlinear part, smoothing estimates along with the uniform bound on the initial data imply that, for any $T>0$, 
\begin{multline} \label{eq:bds}
\Bigl\{(U^n,V^n,W^n)\Big\} \;\text{  is bounded in  } \\
C([-T,T], H^{\frac32 - } \times H^{3-} \times H^{2-}) \cap  C^1([-T,T], H^{ -\frac12 - } \times H^{ 2- } \times H^{1-}). 
\end{multline} 
This has several implications: \begin{enumerate}[{(i)}]
\item \label{itm:weakstar} The Banach-Alaoglu theorem implies weak* convergence of a subsequence  of 
\[\big\{(U^n,V^n,W^n)\big\} \text{\quad in \quad } L^\infty([-T,T], H^{\frac32 - } \times H^{3-} \times H^{2-}). \]
\item \label{itm:weakloc} The Arzela-Ascoli theorem 
implies that $\big\{(U^n,V^n,W^n)\big\}$ is precompact in $C([-T,T], H_{\operatorname{loc}}^{ -\frac12 - } \times H_{\operatorname{loc}}^{ 2-} \times H_{\operatorname{loc}}^{1-})$. By interpolation between this and \eqref{eq:bds}, we find  strongly convergent subsequence of 
\[ \big\{(U^n,V^n,W^n)\big\} \text{\quad in \quad} C([-T,T], H_{\operatorname{loc}}^{ \frac32 -} \times H_{\operatorname{loc}}^{3-} \times H_{\operatorname{loc}}^{2-}). \] 
\end{enumerate}
Similar statements hold for the linear parts $(p^n, q^n, r^n)$ in $H^1 \times H^1 \times L^2$. By passing to a further subsequence, we obtain a sequence, which we still call $\bigl\{(U^n, V^n,W^n)\bigr\}$, which is weak* convergent in $L^\infty([-T,T], H^{\frac32 - } \times H^{3-} \times H^{2-})$ and strongly convergent in $C([-T,T], H_{\operatorname{loc}}^{ \frac32 -} \times H_{\operatorname{loc}}^{3-} \times H_{\operatorname{loc}}^{2-})$. Denote the limit by $(U,V,W)$.  

To see that the limit is a distributional solution, multiply the equations for $(U^n, V^n,W^n)$  by an arbitrary test function $\phi \in C^\infty_c([-T,T] \times \mathbb{R}^d)$, integrate in space and time, and take the limit in $n$. For $U^n$, we have
\begin{align*} 
 &\iint  \biggl[ -i  U\phi_t  + U \Delta \phi  + i \gamma  U \phi + \phi  \Big((U + p - \tilde{f})(V + q + \tilde{g}) - (1+ i \gamma) \tilde{f} \Big) \biggr] \d x \d t \\
= \lim_{n \to \infty}&\iint  \biggl[ -i  U^n\phi_t+ U^n \Delta \phi +  i \gamma U^n\phi  
   + \phi \Big(  (U^n + p^n - \tilde{f})(V^n + q^n + \tilde{g}) - (1+ i \gamma) \tilde{f} \Big) \biggr] \d x \d t= 0. 
\end{align*}
The equality is a consequence of the local strong convergence (\ref{itm:weakloc}) of $U^n$ and $V^n$ and strong local convergence of $p^n$ and $q^n$ in $C([-T,T], H^{1-})$. To verify the limit for the nonlinear term, note that 
\begin{align*}
& \left| \iint \phi \Bigl[(U^n + p^n - \tilde{f})( V^n + q^n + \tilde{g}) - (U + p -\tilde{f})(V + q +\tilde{g})\Bigr] \d x \d t   \right|\\
= &\left|\iint \phi\Bigl[ (U^n + p^n - \tilde{f})\bigl[ V^n - V + q^n - q \bigr] + \bigl[U^n - U + p^n -  p \bigr](V + q + \tilde{g})   \Bigr] \d x\d t \right|  \\
\leq & \|\phi\|_{L^\infty_{x,t}} \left| \iint_{\operatorname{supp}\phi}  \Bigl[ (U^n + p^n - \tilde{f})\bigl[ V^n - V + q^n - q \bigr] + \bigl[U^n - U + p^n -  p \bigr](V + q + \tilde{g})   \Bigr] \d x\d t \right|,
\end{align*}
which decays by local strong convergence. For $V^n$ and $W^n$, we have 
\begin{align*}
 \iint \biggl[ -V \phi_t  + \phi \Big( a V  + a \tilde{g} - W \Big) \biggr] \d x \d t 
= \lim_{n \to \infty} &\iint \biggl[ -V^n \phi_t  +  \phi \Big( a V^n  + a \tilde{g}  - W^n \Big) \biggr] \d x \d t  = 0 
\end{align*}
and
\begin{align*} 
 & \iint  \biggl[  - W\phi_t  - V \Delta \phi + \phi \Big( (\delta - a)  W   + \bigl(1 + a(a-\delta)\bigr) V \Bigr) - \phi \Bigl(|U + p + \tilde{f}|^2 - a(a - \delta) \tilde{g}\Big)\biggr] \d x \d t \\
&=  \lim_{n \to \infty} \iint \biggl[  W^n \phi_t - V^n \Delta \phi  + \phi\Big((\delta - a)W^n   +\bigl(1 + a(a-\delta)\bigr) V^n \Big)  \\
&\hspace{2.65in} -   \phi \Big(|U^n+ p^n + \tilde{f}|^2  - a(a - \delta) \tilde{g} \Big) \biggl] \d x \d t  = 0. 
\end{align*}
Again, convergence of the nonlinear terms follows from strong local convergence. Thus $(U,V,W)$ is a distributional solution of \eqref{eq:nonlin} with $(U(0),V(0),W(0)) = (0,0,0)$. Furthermore, by the weak* convergence (\ref{itm:weakstar}), we see that $(U,V,W)$ is in the uniqueness class $C([-T,T], H^{\frac32-} \times H^{3-} \times H^{2-})$. Thus $(U,V,W) = \bigl[ \S(t) - L(t) \bigr](u_0,v_0,w_0)$. The weak* convergence (\ref{itm:weakstar}) implies weak convergence in $L^2([-T,T], H^{\frac32-} \times H^{3-} \times H^{2-})$ as desired since $L^2([-T,T], H^{\frac32-} \times H^{3-} \times H^{2-})$ is contained in the dual of $C([-T,T], H^{\frac32-} \times H^{3-} \times H^{2-})$. 

To show pointwise weak convergence, fix a $t_0 \in [0,T]$. By again applying the Banach-Alaoglu theorem and passing to a further subsequence if necessary, we can ensure that the convergence described above still holds, along with weak $H^{\frac32-} \times H^{3-} \times H^{2-}$ convergence of $\bigl[\S(t_0) - L(t_0) \bigr](u_0^n,v_0^n,w_0^n)$, say to $(u^*,v^*,w^*)$. Recall that we have shown weak* convergence of $\bigl[\S(t) - L(t) \bigr](u_0^n,v_0^n,w_0^n)$ to $(U,V,W) = \bigl[\S(t) - L(t)\bigr](u_0,v_0,w_0)$ in $C([0,T], H^{\frac32 -} \times H^{3-} \times H^{2-})$. Thus we have $(u^*,v^*,w^*) = \bigl[\S(t) - L(t) \bigr](u_0,v_0,w_0)$. 
\end{proof}

In the remainder of this section, we work with the following transformation of \eqref{eq:dKGS}:
\begin{equation} \label{eq:dKGStransf}
 \begin{cases}
  iu_t + \Delta u + i \gamma u = -uv + f \\
  v_t + a v = w \\
  w_t + (\delta - a) w + \Bigl( 1 + a(a - \delta) - \Delta \Bigr) v = |u|^2 + g.
 \end{cases}
\end{equation}
Again, let $\S(t)$ denote the semigroup operator for \eqref{eq:dKGStransf}, and let $L(t)$ denote the linear flow operator. The evolution \eqref{eq:dKGStransf} has the absorbing ball property in dimensions $d=2,3$. For a proof of this, see \cite[Section 2]{LW}. Thus to obtain a global attractor, it suffices to prove that the evolution is asymptotically compact -- that is, that for every sequence of initial data $\{(u_0^n,v_0^n,w_0^n)\}_n$ in the energy space with corresponding solutions $\{(u^n,v^n,w^n)\}_n$ and every sequence of times $t_n \to \infty$, the sequence $\{(u^n(t_n) ,v^n(t_n),w^n(t_n))\}_n$ has a convergent subsequence in the energy space.

Let $(u_n,v_n,w_n) \in H^1 \times H^1 \times L^2$ be a sequence of initial data. We may assume that the data lies within the absorbing ball. Also let $t_n \to \infty$ be a sequence of times. The Banach-Alaoglu theorem implies that the sequence $\S(t_n)(u_n,v_n, w_n)$ has weakly convergent subsequence in $H^1 \times H^1 \times L^2$. Smoothing estimates together with bounds on the initial data imply that the nonlinear parts are bounded in $H^{\frac32-} \times H^{3-} \times H^{2-}$. Thus we may choose a subsequence such that these nonlinear parts $\S(t_n)(u_n,v_n, w_n) - L(t_n)(u_n,v_n, w_n)$ also converge weakly in $H^{3/2-} \times H^{3-} \times H^{2-}$. Since the linear part decays to zero, these two limits must be equal. Call the limit $(u,v,w)$. 

For any $T$, we can, by passing to a further subsequence, conclude that $\S(t_n - T)(u_n,v_n, w_n)$ converges weakly in $H^1 \times H^1 \times L^2$ and that $\S(t_n - T)(u_n,v_n, w_n) - L(t_n - T)(u_n,v_n, w_n)$ converges weakly in $H^{3/2-} \times H^{3-} \times H^{2-}$. Again, dissipative decay of the linear part implies that the two limits are equal; we denote the limit by $(u_T,v_T,w_T)$. Weak continuity of the semigroup (Lemma \ref{weakcont}) implies that $\S(T)(u_T, v_T, w_T) = (u,v,w)$. Note that by a diagonalization argument, we can obtain such weak convergence of $\S(t_n - T)(u_n,v_n, w_n)$ and $\S(t_n - T)(u_n,v_n, w_n) - L(t_n - T)(u_n,v_n, w_n)$ as above for a countable set of $T$ simultaneously, e.g. $\{T \in \mathbb{N}\}$. This will be important later when we take $T \to \infty$. 

The $L^2$ law for the evolution of $\S(t)$ gives 
\begin{align*}
 \| \S(t_n)&u_n \|_{L^2}^2 = e^{-2\gamma T} \| \S(t_n - T) u_n\|_{L^2}^2 - 2\operatorname{Re} i \int_0^T  e^{2 \gamma (s-T)} \Bigl\lb\S(t_n - T +s) u_n ,\;  f  \Bigr\rb_{L^2_x} \d s, \\
 \| \S(T)&u_T \|_{L^2}^2 = e^{-2\gamma T} \| u_T\|_{L^2}^2 - 2 \operatorname{Re} i \int_0^T e^{2 \gamma (s-T)} \Bigl\lb \S(s) u_T,\;  f \Bigr\rb_{L^2_x} \d s. 
\end{align*}
Combining the two equations yields
\begin{align*}
 \| \S(t_n)u_n \|_{L^2}^2 - \| \S(T)u_T \|_{L^2}^2 &= e^{-2\gamma T} \Bigl( \| \S(t_n - T) u_n\|_{L^2}^2 - \| u_T\|_{L^2}^2 \Bigr) \\
 &+ 2 \operatorname{Re} i \int_0^T \hspace{-4pt} e^{2 \gamma (s-T)} \Bigl\lb \S(s) u_T -\S(t_n - T +s) u_n,\; f \Bigr\rb_{L^2_x}\d s.
\end{align*}
The first term on the right-hand side can be made arbitrarily small by increasing $T$ since the $u_n$ are uniformly bounded in $L^2$.
The second term decays to zero as $n \to \infty$ by the weak continuity of $\S$ in $L^2_tH^1_x$. Thus we conclude that
\[ \limsup_{n \to \infty} \Bigl[ \| \S(t_n)u_n \|_{L^2}^2 - \| \S(T)u_T \|_{L^2}^2 \Bigr] = \limsup_{n \to \infty} \Bigl[ \| \S(t_n)u_n \|_{L^2}^2 - \| u \|_{L^2}^2 \Bigr] \leq 0. \]
With the weak convergence of $S(t_n)u_n$ to $u$, this implies that $\S(t_n) u_n \to u$ strongly in $L^2$. 

Now consider the full $\dot{H}^1 \times H^1 \times L^2$ energy equation. Define the energy functional $ H =  H(u_0,v_0,w_0)(t)$ as follows:
\begin{align*}
 H = 2 \| \nabla \S(t)u_0 \|_{L^2}^2 + \Bigl(1 + a(a-\delta)\Bigr)\| \S(t)v_0\|^2_{L^2}  &+ \| \nabla \S(t)v_0\|_{L^2}^2 + \|\S(t)w_0\|_{L^2}^2 \\
 &- 2 \int  |\S(t)u_0|^2\S(t)v_0 \d x  +  4 \int f \overline{S(t)u_0} \d x. 
\end{align*}
Then the time derivative $\d H/ \operatorname{d} t$ is given by
\begin{align*} 
-4 \gamma \| \nabla \S(t)u_0 \|_{L^2}^2 -2a\Bigl(1 + a(a-\delta)\Bigr)\| \S(t)v_0\|^2_{L^2} - 2a \| \nabla \S(t)v_0\|_{L^2}^2 - 2(\delta - a) \|\S(t)w_0\|_{L^2}^2 \\
+(4 \gamma + 2a) \int |\S(t)u_0|^2\S(t)v_0 \d x  - 4\gamma \operatorname{Re} \int f \overline{S(t)u_0} \d x + 2 \int g S(t)w_0 \d x .
\end{align*}
This implies that 
\[ H(u_n,v_n,w_n)(t_n) - H(u_T,v_T,w_T)(T) = \text{I} + \text{II} + \text{III} + \text{IV} + \text{V}, \] 
where
\begin{align*}
 \text{I} = &\;  e^{-2aT}\Bigl( H(u_n,v_n,w_n)(t_n-T) - H(u_T,v_T,w_T)(0) \Bigr) \\
 \text{II} = & -4(\gamma - a) \int_0^T e^{2a(s-T)} \Bigl[ \|\nabla \S(t_n-T+s)u_n\|^2_{L^2} - \|\nabla \S(s)u_T\|^2_{L^2} \Bigr] \d s \\
 &-2(\delta - 2a) \int_0^T e^{2a(s-T)} \Bigl[ \|\S(t_n - T + s)w_n\|_{L^2}^2  - \|S(s)w_T\|^2_{L^2} \Bigr] \d s \\
 \text{III} = & \; 2(2\gamma - a) \int_0^T \hspace{-4pt}\int e^{2a(s-T)} \Bigl[ \big|\S(t_n - T + s)u_n\big|^2 \S(t_n-T+s)v_n - \big|\S(s)u_T\big|^2 \S(s)v_T \Bigr] \d x \d s \\
 \text{IV} = & -4 (\gamma - 2a) \operatorname{Re} \int_0^T  e^{2a(s-T)} \Bigl\lb \S(t_n - T + s) u_n - S(s) u_T,\; f \Bigr\rb_{L^2_x} \d s \\
 \text{V} = & \; 2 \int_0^T  e^{2a(s-T)} \Bigl\lb \S(t_n - T + s) w_n - S(s) w_T,\; g \Bigr\rb_{L^2_x} \d s. 
\end{align*}
The term $\text{I}$ is negligible for large $T$. For $\text{II}$, weak convergence implies that 
\[ \liminf_{n \to \infty} \|\nabla \S(t_n-T+s)u_n\|^2_{L^2} - \|\nabla \S(s)u_T\|^2_{L^2} \geq 0,\]
\[ \liminf_{n \to \infty} \|\S(t_n-T+s)w_n\|^2_{L^2} - \| \S(s)w_T\|^2_{L^2} \geq 0\]
for each $s$, so the $\limsup$ over $n$ of $\text{II}$ is nonpositive.
Write the integral in $\text{III}$ as 
\begin{align*}
 & \int_0^T \int e^{2a(s-T)} \Bigl[ \big|\S(t_n - T + s)u_n\big|^2 - \big|\S(s)u_T\big|^2\Bigr]L(t_n-T+s)v_n \d x \d s \\
 + & \int_0^T \int e^{2a(s-T)} \Bigl[ \big|\S(t_n - T + s)u_n\big|^2 - \big|\S(s)u_T\big|^2\Bigr]\bigl[\S - L\bigr] (t_n-T+s)v_n \d x \d s \\
 + & \int_0^T \int e^{2a(s-T)} \big|\S(s)u_T\big|^2 \Bigl[ \S(t_n-T+s)v_n - \S(s)v_T \Bigr] \d x \d s.
\end{align*}
To see that the first line vanishes in the limit, apply the $L^3$ Gagliardo-Nirenberg inequality $\| h\|_{L^3} \lesssim \|\nabla h \|_{L^2}^{d/6} \| h \|_{L^2}^{(6-d)/6}$ with the fact that $L(t_n -T+s)v_n \to 0$ uniformly in $H^1$. 
For the second line, extract $\bigl[\S - L\bigr] (t_n-T+s)v_n$ in the $H^{3-} \hookrightarrow L^\infty$ norm and use the strong $L^2$ convergence of $\S(t_n)u_n$ to $\S(T)u_T$ and strong continuity of $\S(s-T)$. The last line decays by weak continuity of $\S(s)$ since $|\S(s)u_T|^2$ is an $L^2$ function by the Gagliardo-Nirenberg inequality $\| h\|_{L^4} \lesssim \|\nabla h \|_{L^2}^{d/4} \| h \|_{L^2}^{(4-d)/4}$. The remaining terms $\text{IV}$ and $\text{V}$ vanish in the limit by weak continuity of the semigroup. 

Thus we conclude that 
\[ \limsup_{n \to \infty} \Bigl[ H(u_n,v_n,w_n)(t_n) - H(u_T,v_T,w_T)(T)\Bigr] = \limsup_{n \to \infty} \Bigl[ H(u_n,v_n,w_n)(t_n) - H(u,v,w)(0)\Bigr] \leq 0. \]
This, together with the weak convergence of $S(t_n)(u_0^n, v_0^n, w_0^n)$ to $(u,v,w)$ in $H^1 \times H^1 \times L^2$, implies that $S(t_n)(u_0^n, v_0^n, w_0^n)$ converges strongly to $(u,v,w)$ in $H^1 \times H^1 \times L^2$. This completes the proof of asymptotic compactness, and thus of the existence of a global attractor. 
\section{Proof of Global Existence in $\mathbb{R}^4$}\label{R4Proof}

In this section, we prove global existence for the Klein-Gordon-Schr\"{o}dinger system in four dimensions. We work with the form of the equation given in \eqref{eq:tKGS} in dimension $d=4$. In the following, we drop the $\pm$ superscripts on $n$ to simplify the notation. Suppose we have $(u_0, n_0) \in H^s \times H^r$ for some $s,r > 9/10$ with $\|u_0\|_{L^2}$ small. Fix $T$ large. We wish to show that the solution $(u,n)$ exists on $[0,T]$. To do so, we decompose the solution into two parts: one with low-frequency initial data and one with the complementary high-frequency data. Specifically, recall that $A = (1- \Delta)^{1/2}$ and write $u = \phi  + \mu$ and $n= \psi + \lambda$, where 
\begin{equation} \label{eq:low}
\begin{cases}
i \phi_t + \Delta \phi = -\frac12 \operatorname{Re}(\psi) \phi\\
i\psi_t  \mp A\psi = \mp {A}^{-1}{|\phi|^2},
\end{cases}
\end{equation}
\begin{equation}\label{eq:high}
\begin{cases}
i \mu_t + \Delta \mu = -\frac12\operatorname{Re}(\lambda + \psi)\mu -\frac12 \operatorname{Re}(\lambda)\phi \\
i\lambda_t  \mp A\lambda = \mp {A}^{-1}|\mu|^2 \mp 2 \operatorname{Re}   A^{-1} \mu \overline{\phi}.
\end{cases}
\end{equation}
The initial data for these two systems is $(\phi_0,\psi_0) = (P_{\leq N} u_0,  P_{\leq N}n_0)$ and $(\mu_0, \lambda_0) = (u_0 - \phi_0 , n_0 - \psi_0)$, where $P_{\leq N}$ is the projection onto Fourier modes less than $N$. We will allow these equations to evolve a local-theory time step $\delta$. Then we add the nonlinear part of  $(\mu, \lambda)$ to $(\phi, \psi)$ and start again, i.e. evolve \eqref{eq:low} and \eqref{eq:high} another local time step with initial data 
\begin{align*} (\phi_1, \psi_1) &= \Big(\phi(\delta) + \bigl[ \mu(\delta) - e^{i\delta\Delta}\mu_0 \bigr] ,\; \psi(\delta) + \bigl[ \lambda(\delta) - e^{\mp i\delta A}\lambda_0 \bigr]\Big) \\
(\mu_1, \lambda_1)&=(e^{i\delta\Delta}\mu_0,\; e^{\mp i \delta A}\lambda_0).
\end{align*}
To iterate this process, we use smoothing estimates to show that the nonlinear part of $(\mu, \lambda)$ is in $H^1 \times H^1$ and that relevant norms do not grow too rapidly, so that a uniform time step $\delta$ can be used to cover $[0,T]$. 

We will need the following observations, which hold for $s_0 \leq s \leq 1$ and $r_0 \leq r \leq 1$ to carry out the local theory estimates:
\begin{align}\label{eq:highlo}
\begin{split}
\|\phi_0 \|_{H^1} &\leq N^{1-s} \|u_0\|_{H^s} \lesssim N^{1-s}   \qquad \;\;\, \|\psi_0\|_{H^1} \leq N^{1-r} \|n_0\|_{H^r} \lesssim N^{1-r}\\
\|\mu_0\|_{H^{s_0}} &\leq  N^{s_0 -s} \|u_0\|_{H^s} \lesssim N^{s_0 -s }   \qquad  \|\lambda_0\|_{H^{r_0}} \leq N^{r_0 -r} \|n_0\|_{H^r} \lesssim N^{r_0 - r}.
\end{split}
\end{align} 
In the following, we let $m = \min\{s, r\}$ and $s_0 = r_0 = \frac12+$. 

In four dimensions, the relevant nonlinear estimates for the local theory are in \cite{GTV}. In particular, from \cite[Lemma 3.4]{GTV}, we obtain
\[ \| un\|_{X^{k, -\frac12 + \epsilon}} \lesssim T^\theta \|u\|_{X^{k, \frac12 + \epsilon}}\|n\|_{X^{\ell, \frac12 + \epsilon}_\pm}\]
where $\theta = \min\{ \frac12, \frac\ell2 - (1 - \frac1{1 + 2\epsilon}) \}$ and $k \leq \ell + 1 - 2\epsilon$. From \cite[Lemma 3.5]{GTV}, we have also
\[ \| A^{-1}|u|^2 \|_{X^{\ell , -\frac12 + \epsilon}_\pm} = \| |u|^2 \|_{X^{\ell-1, -\frac12 + \epsilon}_\pm} \lesssim T^\theta \|u\|^2_{X^{k, \frac12 + \epsilon}}\]
as long as $2k - \ell + 1 > 0$ with $\theta =(2k - \ell + 1)/2 - 2 \epsilon/(1 + 2 \epsilon)$.
Thus a time step
\[ \delta \lesssim N^{-2(1-m)/{r_0}-}\lesssim \left\lb  \|\mu_0\|_{H^{s_0}} + \|\lambda^\pm_0\|_{H^{r_0}} + \| \phi_0\|_{H^1} + \|\psi_0 ^\pm\|_{H^1}  \right\rb^{-2/{r_0}- } .\] 
yields local existence for \eqref{eq:low} in $H^1 \times H^1$ on $[0,\delta]$ with 
\[ \| \phi\|_{X^{1,\frac12+}_\delta} + \|\psi\|_{X^{1,\frac12+}_{\pm, \delta}} \lesssim \|\phi_0 \|_{H^1} + \|\psi_0\|_{H^1} \lesssim N^{1-m}, \]
and local existence for \eqref{eq:high} in $H^{s_0} \times H^{r_0}$ with
\[ \| \mu\|_{X^{s_0,\frac12+}_\delta} + \|\lambda\|_{X^{r_0,\frac12+}_{\pm, \delta}} \lesssim \|\mu_0 \|_{H^{s_0}} + \|\lambda_0\|_{H^{r_0}} \lesssim N^{\max\{ s_0 -s , r_0 -r \}} = N^{1/2 - m +}. \]

Next write $\mu = e^{it\delta\Delta}\mu_0 + w(t)$ and $\lambda = e^{\mp itA}\lambda_0 + z(t)$, where $w$ and $z$ are the Duhamel terms
\begin{align*}
w(t) &= -\frac12 \int_0^t e^{i(t-s)\Delta}\Bigl[\operatorname{Re}(\lambda + \psi)\mu + \operatorname{Re}(\lambda)\phi\Bigr] \d s, \\
z(t) &=  \int_0^t e^{\mp i (t-s)A} \Bigl[ \mp {A}^{-1}|\mu|^2 \mp 2 \operatorname{Re}   A^{-1} \mu \overline{\phi} \Bigr] \d s. 
\end{align*}
Then for $r_0 = s_0 = \frac12 +$ and $m = \min\{s,r\}$, we have, using the estimates \eqref{eq:sch} and \eqref{eq:kg} along with the local theory bounds, 
\begin{align}\label{eq:hilononlin}
\begin{split}
\|w\|_{L^\infty_{[0,\delta]}H^{1}} &\lesssim \|w\|_{X^{1, \frac12+}_\delta} \lesssim \| \mu \|_{X^{s_0, \frac12 +}_\delta}\Big(\|\lambda\|_{X^{r_0, \frac12 +}_{\pm ,\delta}} + \| \psi\|_{X^{1, \frac12 +}_{\pm ,\delta}} \Big) + \|\phi\|_{X^{1, \frac12 +}_\delta}\|\lambda\|_{X^{r_0, \frac12 +}_{\pm ,\delta}} \\
&\lesssim N^{\max\{s_0 -s, r_0 -r\}}N^{1-m} = N^{3/2 - 2m+} \\
\|z\|_{L^\infty_{[0,\delta]}H^{1}} &\lesssim \| w\|_{X^{1, \frac12 +}_{\pm ,\delta}} \lesssim \| \mu\|_{X^{s_0, \frac12+}_\delta}\Big(\|\mu \|_{X^{s_0, \frac12+}_\delta} + \|\phi\|_{X^{1,\frac12+}_\delta} \Big)\\
&\lesssim N^{\max\{s_0 -s, r_0 -r\}}N^{1-m} = N^{3/2 - 2m+}.
\end{split}
\end{align}

To iterate, we must ensure that estimates \eqref{eq:highlo} and \eqref{eq:hilononlin} remain valid for each time step. This is immediate for the $(\mu, \lambda)$ initial data, since it is always simply a linear flow. Thus proving the requisite bounds amounts to showing that the $H^1 \times H^1$ norm of $(\phi, \psi)$ is bounded by $N^{1-m}$ over each time step. To do so, we use the $L^2$ conservation and the Hamiltonian energy. Notice that the initial data for $\phi$ at time step $k$ is $u(k \delta) - e^{ik\delta\Delta}\mu_0$, and the $L^2$ conservation gives $\| u - e^{ik\delta \Delta} \mu_0\|_{L_2} \leq \|u_0\|_{L_2}(1 + N^{-s})$. Thus we have uniform control over $\|\phi\|_{L^2}$. To control the remaining components of the $H^1 \times H^1$ norm, use the Hamiltonian
\[E(u, n) = \|An\|^2_{L^2} + 2 \|\nabla u \|^2_{L^2} - 2\int |u|^2 \operatorname{Re} (n )\d x.\] 
This is conserved for the flow of \eqref{eq:low}, so we need only check that it does not grow too much due to the addition of the nonlinear terms. Using the Gagliardo-Nirenberg inequality and Cauchy-Schwarz, the increment of the energy is bounded as follows, where the norms are all evaluated at time $\delta$: 
\begin{align*}
\Big|& E(\phi(\delta) + w(\delta), \psi^\pm(\delta) + z^\pm(\delta)) - E(\phi(\delta), \psi^\pm(\delta)) \Big| \\
&\lesssim \|Az^\pm\|_{L^2}\Big(\|Az^\pm\|_{L^2} + 2 \|An^\pm\|_{L^2} \Big) + 2 \|\nabla w\|_{L^2} \Big(\|\nabla w \|_{L^2} + 2 \|\nabla \phi\|_{L^2} \Big) \\
&+\|\nabla z^\pm\|_{L^2}\|\phi + w\|_{L^2} \|\nabla\big(\phi + w\big)\|_{L^2} \\
&+ \|\nabla \psi^\pm\|_{L^2} \Big( \|w\|_{L^2} \|\nabla w\|_{L^2} + \|\phi+w\|_{L^2}\|\nabla w \|_{L^2} \Big)
\end{align*}
Noting that $\|\phi + w \|_{L^2} = \|u - e^{it\Delta}\mu_0\|_{L^2} \lesssim 1$ and $\|w\|_{L^2} \lesssim 1$, this quantity can be controlled by $N^{3/2 - 2m +}N^{1-m}$, so for $E$ to remain bounded by $N^{2(1-m)}$, we require
\[ N^{3/2 - 2m +}N^{1-m}N^{2(1-m)/r_0+} \lesssim N^{2(1-m)}, \]
which holds if $m > 9/10$. 
To complete the proof, we need to show that the energy controls the $\dot{H}^1 \times H^1$ norm at each time step. This depends on the smallness assumption on $u$ in $L^2$. By the Gagliardo-Nirenberg inequality, we have 
\[ \left| \int |\phi|^2 \psi \d x \right| \leq \|\phi\|^2_{L^{8/3}} \|\psi\|_{L^4} \leq C_1C_2^2 \|\phi\|_{L^2}\|\nabla \phi \|_{L^2} \|\nabla \psi\|_{L^2} \leq  C_1C_2^2 \|\phi\|_{L^2}\|\nabla \phi  \|_{L^2} \|A \psi \|_{L^2}, \]
where $C_1$ and $C_2$ are the sharp constants of the following inequalities:
\begin{align*}
\| f\| _{L^4(\mathbb{R}^4)} \leq C_1 \| \nabla f \|_{L^2(\mathbb{R}^4)}  \qquad \| f\|_{L^{8/3}(\mathbb{R}^4)} \leq C_2 \| f\|_{L^2(\mathbb{R}^4)}^{1/2} \|\nabla f\|_{L^2(\mathbb{R}^4)}^{1/2}.
\end{align*}
By our construction of $\phi$ and the assumption that $\|u_0\|_{L^2} <  \sqrt{2}/(C_1C_2^2)$, we have at each time step $\|\phi\| < (1 + N^{-s})\sqrt{2} /(C_1C_2^2)$. By choosing $N$ large, we also have $\|\phi\|_{L^2} <  \sqrt{2}/(C_1C_2^2)$ at each step. Choose $C_0<1$ so that $\|\phi\|_{L^2} C_1C_2^2 < \sqrt{2} C_0$. Then we have 
\[ E(\phi, \psi)  = \|A \psi\|_{L^2}^2 + 2\|\nabla \phi \|_{L^2}^2 -2 \sqrt{2} C_0 \|\nabla \phi  \|_{L^2} \|A \psi \|_{L^2} \gtrsim  \|A \psi\|_{L^2}^2 + 2\|\nabla u \|_{L^2}^2. \]
Thus $E( \phi, \psi) \approx \|\phi\|_{\dot{H}^1}^2 + \|\psi\|_{H^1}^2$ at each time step.

\section{Proof of Proposition \ref{schrodinger_est}} \label{estimateProof}

By duality, to obtain the smoothing estimate \eqref{eq:sch} it suffices to show that 
\[ \iint uvw \d x \d t = \iint \widehat{uv}(\xi_0, \tau_0) \widecheck{w}(\xi_0, \tau_0) \d \xi_0 \d \tau_0 \lesssim \|u\|_{X^{s,b}} \|v\|_{X^{r,b}_\pm} \|w\|_{X^{-(s + \alpha),1-b}}. \]
We introduce the functions $f_i$, which allow us to state the estimate in terms of $L^2$ norms:
\[ f_1 = \lb \xi \rb^{s} \lb \tau + |\xi|^2 \rb ^{b} \hat{u}, \quad f_2 = \lb \xi \rb^{r} \lb \tau \pm |\xi| \rb ^{b} \hat{v},  \quad \text{and}\quad f_3 = \lb \xi \rb^{-(s + \alpha)} \lb \tau + |\xi|^2 \rb ^{1-b} \hat{w} . \]
Using these functions and the convolution structure of $\widehat{uv}$, the required estimate takes the form
\begin{equation}\label{eq:mainEst}
 \iiiint\limits_{\substack{\sum \xi_i = 0 \\ \sum \tau_i = 0 }} \frac{\lb \xi_0\rb^{s + \alpha} \lb \xi_1\rb^{-s} \lb \xi_2 \rb ^{-r} f_0(\xi_0, \tau_0) f_1(\xi_1,\tau_1)f_2(\xi_2,\tau_2)}{\lb \tau_0 - |\xi_0|^2 \rb^{1-b} \lb \tau_1 + |\xi_1|^2 \rb^b \lb \tau_2 \pm | \xi_2 |  \rb^b} \d\xi_1 \d\xi_2 \d\tau_1 \d\tau_2 \lesssim  \prod_{i = 0}^2 \|f_i\|_{L^2_{\xi,\tau}}. 
\end{equation} 
Before proceeding, we state a one-dimensional calculus lemma which will be used repeatedly. For proofs of similar results, see \cite{ET2}.
\begin{lem}\label{calcEst}
 If $\alpha > 1$ and $\alpha \geq \beta \geq 0 $, then  
\[\int_\mathbb{R} \frac{\d y}{\lb y - a \rb^{\alpha} \lb y-b \rb^{\beta}} \lesssim \lb a - b\rb ^{- \beta}. \] 
\end{lem}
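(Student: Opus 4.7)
The plan is a direct three-region decomposition of the real line. First I would dispatch the trivial range $|a-b| \leq 2$, where $\lb a-b\rb \approx 1$, so the asserted inequality reduces to the statement that the integral is finite; this follows immediately by discarding the $\lb y-b\rb^\beta \geq 1$ factor and using $\int \lb y-a\rb^{-\alpha}\,dy \lesssim 1$ (which holds because $\alpha>1$). So from now on I may assume $|a-b| \geq 2$, in which case $\lb a-b\rb \approx |a-b|$.

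Next I would split $\mathbb{R} = A \cup B \cup C$, where
\[
A = \{ |y-a| \leq |a-b|/2\},\quad B = \{ |y-b| \leq |a-b|/2\},\quad C = \mathbb{R} \setminus (A\cup B).
\]
On $A$, the triangle inequality gives $|y-b| \geq |a-b|/2$, hence $\lb y-b\rb \gtrsim \lb a-b\rb$; pulling this factor out and bounding $\int_A \lb y-a\rb^{-\alpha}\,dy \lesssim 1$ (using $\alpha>1$) yields the contribution $\lesssim \lb a-b\rb^{-\beta}$. On $C$ both brackets are $\gtrsim \lb a-b\rb$, so I extract the full $\lb a-b\rb^{-\beta}$ from the $\lb y-b\rb$ factor and again integrate $\lb y-a\rb^{-\alpha}$ over $\mathbb{R}$, getting the same bound.

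The only region where care is needed is $B$, where $\lb y-a\rb \gtrsim \lb a-b\rb$ but the remaining $\lb y-b\rb^{-\beta}$ integrates over a ball of radius $\approx |a-b|/2$, producing the elementary bound
\[
\int_B \frac{dy}{\lb y-b\rb^\beta} \lesssim \begin{cases} 1 & \beta > 1,\\ \log\lb a-b\rb & \beta = 1,\\ \lb a-b\rb^{1-\beta} & 0\leq \beta < 1.\end{cases}
\]
Combining with the extracted $\lb a-b\rb^{-\alpha}$, the contribution of $B$ is $\lesssim \lb a-b\rb^{-\alpha}$ when $\beta \geq 1$ and $\lesssim \lb a-b\rb^{1-\alpha-\beta}$ when $\beta<1$ (with a harmless logarithmic loss at $\beta=1$, absorbed by the strict inequality $\alpha>1$). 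Both bounds are $\leq \lb a-b\rb^{-\beta}$: the first because $\alpha\geq\beta$, the second because $\alpha\geq 1$. Summing the contributions of $A$, $B$, $C$ completes the proof. The only subtle point — and the sole place where both hypotheses $\alpha > 1$ and $\alpha \geq \beta$ are simultaneously invoked — is the $B$-region estimate when $\beta$ is small.
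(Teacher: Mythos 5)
Your proof is correct. Note that the paper does not actually prove this lemma at all: it simply records it and points to \cite{ET2} for proofs of similar statements, so your self-contained argument fills in what the paper leaves to a citation. Your decomposition into the two balls around $a$ and $b$ plus the outer region is exactly the standard way such convolution-type bounds are proved in the cited literature, and you handle the only delicate points correctly: on the ball around $b$ you rightly distinguish $\beta<1$, $\beta=1$, and $\beta>1$, use $\alpha\geq\beta$ to absorb the $\lb a-b\rb^{-\alpha}$ factor when $\beta\geq 1$ (with the logarithm at $\beta=1$ killed by the strict inequality $\alpha>1$), and use $\alpha\geq 1$ when $\beta<1$; the trivial regime $|a-b|\lesssim 1$ and the regions $A$ and $C$ are handled as they should be, with $\alpha>1$ guaranteeing integrability of $\lb y-a\rb^{-\alpha}$. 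Nothing is missing, and the hypotheses are used exactly where they are needed.
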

We proceed with the proof by breaking the integration region into many components and considering each separately. \\

\noindent{\underline{\textbf{CASE 0.} $|\xi_1|, |\xi_2| \lesssim 1$}. 
We ignore the order one multipliers $\lb \xi_0\rb^{s + \alpha} \lb \xi_1\rb^{-s} \lb \xi_2 \rb ^{-r}$  on the left-hand side of \eqref{eq:mainEst} and work with 
\begin{align*}
 \iiiint\limits_{\substack{\sum \xi_i = 0 \\ \sum \tau_i = 0 }} &\frac{f_0(\xi_0, \tau_0) f_1(\xi_1,\tau_1)f_2(\xi_2,\tau_2)}{\lb \tau_0 - |\xi_0|^2 \rb^{1-b} \lb \tau_1 + |\xi_1|^2 \rb^b \lb \tau_2 \pm | \xi_2|\rb^b}  \d\xi_1 \d\xi_2 \d\tau_1 \d\tau_2 \\
&\lesssim \|f_0\|_{L^2} \left\| \lb \tau_0 - |\xi_0|^2 \rb^{b-1}\iint\frac{f_1(\xi_1, \tau_1)f_2(-\xi_0-\xi_1, -\tau_0-\tau_1)}{ \lb \tau_1 + |\xi_1|^2 \rb^b \lb-\tau_0 - \tau_1 \pm | \xi_0+\xi_1|\rb^b}   \d\xi_1  \d\tau_1\right\|_{L^2_{\xi_0,\tau_0}}.
\end{align*}

Using Cauchy-Schwartz in $\d \xi_1 \d \tau_1$ and then in $\d \xi_0 \d \tau_0$, the $\xi_0,\tau_0$ norm in the previous line is bounded by 
\begin{align*}
&\left\| \iint \hspace{0pt}f_1^2(\xi_1, \tau_1)f_2^2(-\xi_0-\xi_1, -\tau_0-\tau_1) \d \xi_1 \d \tau_1 \iint \hspace{0pt}\frac{\lb \tau_0 - |\xi_0|^2 \rb^{2b-2} \d\xi_1  \d\tau_1}{ \lb \tau_1 + |\xi_1|^2 \rb^{2b}\lb-\tau_0 - \tau_1 \pm | \xi_0+\xi_1|\rb^{2b}}  \right\|_{L^1_{\xi_0,\tau_0}}^{1/2} \\
&\lesssim \left( \sup\limits_{\xi_0, \tau_0} \quad \iint \frac{\lb \tau_0 - |\xi_0|^2 \rb^{2b-2}}{ \lb \tau_1 + |\xi_1|^2 \rb^{2b} \lb-\tau_0 - \tau_1 \pm | \xi_0+\xi_1|\rb^{2b}}\d\tau_1 \d\xi_1 \right)^{1/2} \\
& \hspace{2in}  \times \left\| \iint f_1^2(\xi_1, \tau_1)f_2^2(-\xi_0-\xi_1, -\tau_0-\tau_1) \d \xi_1 \d \tau_1 \right\|_{L^1_{\xi_0,\tau_0}}^{1/2}.
\end{align*}
Notice that the $L^1$ norm on the last line is  $\bigl( \|f_1^2\|_{L^1_{\xi,\tau}}\|f_2^2\|_{L^1_{\xi,\tau}}\bigr)^{1/2} = \|f_1\|_{L^2_{\xi,\tau}}\|f_2\|_{L^2_{\xi,\tau}}$, so we need only show that the supremum is finite. This is simple when $|\xi_1| \lesssim 1$. First use the fact that $\lb a + b \rb \lesssim \lb a \rb \lb b \rb$ to obtain
\begin{align*} \iint \frac{\lb \tau_0 - |\xi_0|^2 \rb^{2b-2}\lb \tau_1 + |\xi_1|^2 \rb^{-2b}}{  \lb-\tau_0 - \tau_1 \pm | \xi_0+\xi_1|\rb^{2b}}\d\tau_1 \d\xi_1  
\lesssim \iint \frac{\lb \tau_0 + \tau_1 - |\xi_0|^2 + |\xi_1|^2 \rb^{2b-2}}{ \lb-\tau_0 - \tau_1 \pm | \xi_0+\xi_1|\rb^{2b}} \d\tau_1 \d\xi_1. 
\end{align*}
Apply Lemma \ref{calcEst} and the fact that the integral is constrained to the region $|\xi_1| \lesssim 1$ to bound the supremum by 
\begin{align*}
\sup_{\xi_0} &\int \lb  |\xi_1|^2 - |\xi_0|^2 \pm |\xi_0 + \xi_1|  \rb^{2b-2} \d\xi_1   \lesssim 1. 
\end{align*}

This finishes with the region where all the $\xi_i$ are small. The argument holds in any dimension and puts no constraints on $s$, $r$, or $\alpha$. 

For the remaining cases, the resonances of the equation play a significant role. To deal with them, we need a few definitions. Let $\alpha$ denote the angle between $\xi_1$ and $\xi_2$. We define the maximum modulation $M$ as follows, and use the fact that $\tau_0 + \tau_1 + \tau_2 = 0$ and $\xi_0 + \xi_1 + \xi_2 = 0$ to bound it:
\begin{align*}  M = \max\left\{| \tau_0 - |\xi_0|^2|, |\tau_1 + |\xi_1|^2|, |\tau_2 \pm | \xi_2 | | \right\} &\gtrsim \left| |\xi_0|^2 - |\xi_1|^2 \mp |\xi_2|  \right| \\
&= 2 |\xi_1||\xi_2| \left| \cos \alpha + \frac{|\xi_2| \mp 1 }{2|\xi_1|} \right| = 2 |\xi_1 || \xi_2| A. 
\end{align*}
We also need a dyadic decomposition. Let $f_i^{M^j} = f_i\big|_{\{|\xi|\approx M^j\}}$ for $M^j$ dyadic so that $f_i = \sum_{M^j} f_i^{M^j} $. Then from \eqref{eq:mainEst}, it suffices to show that 
\begin{equation}\label{eq:dyadEst}
\sum\limits_{\substack{M_i^j \\ \text{dyadic}}} \;\;  \iiiint\limits_{\substack{\sum \xi_i = 0 \\ \sum \tau_i = 0 }} \frac{f_0^{M_0^j}(\xi_0, \tau_0)f_1^{M_1^j}(\xi_1, \tau_1)f_2^{M_2^j}(\xi_2, \tau_2) \d\xi_1\d\xi_2 \d\tau_1\d\tau_2}{\lb \xi_0\rb^{-(s + \alpha)} \lb \xi_1\rb^{s} \lb \xi_2 \rb ^{r}  \lb \tau_0 - |\xi_0|^2 \rb^{1-b} \lb \tau_1 + |\xi_1|^2 \rb^b \lb \tau_2 \pm | \xi_2 |  \rb^b} \lesssim  \prod_{i = 0}^2 \|f_i\|_{L^2_{\xi,\tau}}. 
\end{equation} 
In the following cases, we drop the $M_i^j$ superscript to lighten the notation, and implicitly assume that $f_i$ is supported on the dyadic shell $|\xi| \approx M_i$. This results in estimates which depend on the $M_i$. To finish the proof, we show in each case that these estimates can be dyadically summed to yield \eqref{eq:dyadEst}. \\

\noindent\underline{ \textbf{CASE 1.} $M = |\tau_0 - |\xi_0|^2|$}. In this case, we must control 
\begin{equation} \label{eq:case1}
 \lb M_0\rb^{s + \alpha} \lb M_1 \rb^{-s} \lb M_2 \rb^{-r} \iiiint\limits_{\substack{\sum \xi_i = 0 \\ \sum \tau_i = 0 }} \frac{f_0(\xi_0, \tau_0) f_1(\xi_1,\tau_1)f_2(\xi_2,\tau_2)}{\lb M \rb ^{1-b}\lb \tau_1 + |\xi_1|^2 \rb^b \lb \tau_2 \pm |\xi_2|\rb^b}   \d\xi_1\d\xi_2 \d\tau_1\d\tau_2. 
 \end{equation}
Recall that $M \gtrsim M_1M_2 |A|$. We first consider the nonresonant regions, i.e. where $|A| \gtrsim 1$. \\

\noindent\underline{\textbf{Case 1.1.} $M_0 \lesssim M_1 \approx M_2$ and $|A| \gtrsim 1$}. Estimate the maximum $M$ by $M_1M_2$. Decompose the functions $f_1$ and $f_2$ parabolically: 
\begin{align*}
f_1 &= \sum_{n\in \Z} f_1^n, \quad \text{ where }\quad f_1^n = f_1\chi_{\{\tau_1 + |\xi_1|^2 = n + \mathcal{O}(1)\}}\\
f_2 &= \sum_{m \in \Z} f_2^m, \quad \text{ where }\quad  f_2^m = f_2\chi_{\{\tau_2 \pm | \xi_2|  = m + \mathcal{O}(1)\}}.
\end{align*}
Using this decomposition, controlling the integral in \eqref{eq:case1} amounts to bounding
\begin{align} \label{eq:int1}
\begin{split}
\sum_{n \in \Z} \sum_{m  \in \Z} \lb n \rb ^{-b} \lb m \rb^{-b} \iiiint\limits_{\substack{ \theta_i = \mathcal{O}(1) }} f_0&(-\xi_1-\xi_2, |\xi_1|^2 \pm | \xi_2 | -n -m - \theta_1 - \theta_2)\\
\times \; &f_1^n(\xi_1, -|\xi_1|^2 + n + \theta_1) f_2^m(\xi_2, \mp | \xi_2 | + m + \theta_2) \d\xi_1\d\xi_2 \d\theta_1 \d\theta_2.
\end{split}
\end{align}

To do so, recall we're assuming $M_0 \lesssim M_1 \approx M_2$, and decompose the supports of $f_1$ and $f_2$ into squares (or in higher dimensions, hypercubes) of side length $L \approx  M_0$. Denote these squares by $\{ |\xi_1| \approx M_1\} = \bigcup_i  Q_i$ and $\{|\xi_2| \approx M_2\} = \bigcup_j  R_j $. Note that since $\sum \xi_i = 0$ and $M_0 \lesssim M_1, M_2$, the square $R_j = R_{j(i)}$ is essentially determined by the square $Q_i$. Technically, each region $Q_i$ could correspond to up to $3^d$ of the $R_j$ regions, but this factor does not harm the estimates. Let $f^n_{1,Q_i} = f^n_1\chi_{\{\xi_1 \in Q_i\}}$ and $f^m_{2,R_j} = f^m_2 \chi_{\{\xi_2 \in R_j\}}$. \\

For the moment, consider only the inner $\d \xi_1 \d\xi_2$ integral in \eqref{eq:int1}. For fixed $\theta_i$, $n$, and $m$, change variables by letting $u = -\xi_1 - \xi_2$ and $v = |\xi_1|^2 \pm | \xi_2 | - n -m -\theta_1 - \theta_2$. We will use $u$ and $v$ to replace $\xi_1$ and one component of $\xi_2$. Let $\xi_i = (\xi_{i,1}, \xi_{i,2}, \ldots, \xi_{i,d})$. Computing the Jacobian matrix for the change of variables in the two-dimensional case gives 
\begin{equation*}
 \begin{bmatrix} 
  \frac{\d u_1}{\d\xi_{1,1}}       & \frac{ \d u_1}{\d\xi_{1,2}}       & \frac{ \d u_1}{\d\xi_{2,1}}       & \frac{\d u_1}{\d\xi_{2,2}} \\ 
  \frac{\d u_2}{\d\xi_{1,1}}       & \frac{ \d u_2}{\d\xi_{1,2}}       & \frac{ \d u_2}{\d\xi_{2,1}}       & \frac{\d u_2}{\d\xi_{2,2}} \\
  \frac{\d v}{\d\xi_{1,1}}         & \frac{ \d v}{\d\xi_{1,2}}         & \frac{ \d v}{\d\xi_{2,1}}         & \frac{\d v}{\d\xi_{2,2}}   \\
  \frac{\d \xi_{2,2}}{\d\xi_{1,1}} & \frac{ \d \xi_{2,2}}{\d\xi_{1,2}} & \frac{ \d \xi_{2,2}}{\d\xi_{2,1}} & \frac{\d \xi_{2,2}}{\d\xi_{2,2}} 
 \end{bmatrix} = 
 \begin{bmatrix} 
  -1       &  0       & -1       &  0 \\ 
   0       & -1       &  0       & -1 \\
   2\xi_{1,1} & 2\xi_{1,2} & \pm \frac{\xi_{2,1}}{|\xi_2|} & \pm \frac{\xi_{2,2}}{|\xi_2|} \\ 
   0          & 0          & 0                             & 1 
 \end{bmatrix}
\end{equation*}
when we replace $\xi_1$ and $\xi_{2,1}$ by $u$ and $v$. The result in the case when we replace $\xi_{2,2}$ instead of $\xi_{2,1}$ is similar -- the one in the final row just moves a column to the left. Computing the determinant of the Jacobian matrix, we see that 
\begin{align*}
\d u \d v  \d\xi_{2,2} = \left| 2\xi_{1,1} \mp \frac{\xi_{2,1}}{|\xi_2 |} \right| \d\xi_1 \d\xi_2 \quad \text{ and } \quad 
\d u  \d v \d \xi_{2,1} = \left| 2\xi_{1,2} \mp \frac{\xi_{2,2}}{| \xi_2 |} \right| \d\xi_1 \d\xi_2,
\end{align*}
depending on which component of $\xi_2$ we retain. In higher dimensions the result is parallel:
\[ \d u \d v \d \xi_{2,1} \cdots \d \xi_{2,j-1} \d \xi_{2, j+1} \cdots \d \xi_{2,d} = \left| 2 \xi_{1,j} \pm \frac{\xi_{2,j}}{|\xi_2|}\right| \d \xi_1 \d \xi_2. \] 

We may assume that $M_1 \gg 1$; Case 0 dealt with the region where all $M_i$ are small. Then we have $|\xi_{1,j}| \approx M_1 \gg 1$ for some $j$. Without loss of generality, assume that $|\xi_{1,1}| \approx M_1$. Let $\pi: \R^d \to \R^{d-1}$ be the projection onto the last $d-1$ components. Define
\[H(u,v,\xi_{2,2}, \xi_{2,3}, \ldots, \xi_{2,d}) = f^n_{1,Q_i}(\xi_1, -|\xi_1|^2 +n+ \theta_1)f^m_{2,R_{j(i)}}(\xi_2, \mp | \xi_2 | +m+ \theta_2).\]
Then the $\d \xi_1 \d\xi_2$ integral in \eqref{eq:int1} is bounded by 
\begin{align*} 
&\sum_{Q_i}\; \iiint\limits_{(\xi_{2,2}, \ldots, \xi_{2,d}) \in \pi(R_{j(i)})} f_0(u,v) H(u,v,\xi_{2,2}, \ldots, \xi_{2,d}) \left| 2\xi_{1,1} \mp \frac{\xi_{2,1}}{| \xi_2|} \right| ^{-1}  \d u  \d v  \d\xi_{2,2} \ldots \d \xi_{2,d}  \\
\lesssim&\; M_1^{-1} \| f_0\|_{L^2}\sum_{Q_i}  \left\| \int_{(\xi_{2,2}, \ldots, \xi_{2,d}) \in \pi(R_{j(i)})}  H(u,v,\xi_{2,2}, \ldots, \xi_{2,d}) \d\xi_{2,2} \ldots \d \xi_{2,d} \right\|_{L^2_{u,v}} \\ 
\leq&\; L^{(d-1)/2} M_1^{-1} \| f_0\|_{L^2}\sum_{Q_i}  \left\| H(u,v,\xi_{2,2}, \ldots, \xi_{2,d}) \right\|_{L^2_{u,v,\xi_{2,2}, \ldots, \xi_{2,d}}}\\ 
\lesssim&\; L^{(d-1)/2} M_1^{-1/2} \| f_0\|_{L^2_{\xi, \tau}}\sum_{Q_i} \left\| H(\xi_1,\xi_2) \right\|_{L^2_{\xi_1,\xi_2}} \\ 
=&\; L^{(d-1)/2} M_1^{-1/2} \| f_0\|_{L^2_{\xi, \tau}} \sum_{Q_i}\| f^n_{1,Q_i}(\xi_1, -|\xi_1|^2 +n+ \theta_1)\|_{L^2_{\xi_1}} \|f^m_{2,R_{j(i)}}(\xi_2, \mp | \xi_2 | +m+ \theta_2)\|_{L^2_{\xi_2}} \\
\leq&\;  L^{(d-1)/2} M_1^{-1/2} \| f_0\|_{L^2_{\xi, \tau}} \| f^n_{1}(\xi_1, -|\xi_1|^2 +n+ \theta_1)\|_{L^2_{\xi_1}} \|f^m_{2}(\xi_2, \mp | \xi_2 | +m+ \theta_2)\|_{L^2_{\xi_2}}.
\end{align*}
The last inequality follows from applying Cauchy-Schwarz to the $Q_i$ sum. Thus \eqref{eq:int1} is bounded by 
\begin{align*}
&L^{(d-1)/2} M_1^{-1/2} \| f_0\|_{L^2_{\xi, \tau}}
\sum_{n \in \Z} \sum_{m  \in \Z} \lb n \rb ^{-b} \lb m \rb^{-b} \iint\limits_{\theta_i = \mathcal{O}(1) }
 \| f^n_{1}(\xi_1, -|\xi_1|^2 +n+ \theta_1)\|_{L^2_{\xi_1}} \\
 & \hspace{3in} \times \|f^m_{2}(\xi_2, \mp | \xi_2 | +m+ \theta_2)\|_{L^2_{\xi_2}} \d \theta_i 
 \end{align*}
By Cauchy-Schwarz in $\theta_1$ and $\theta_2$, using the fact that $\theta_i  = \mathcal{O}(1)$, and then in $n$ and $m$ using the fact that $b > \frac12$, bound this by
 \begin{align*}
 & L^{(d-1)/2} M_1^{-1/2} \| f_0\|_{L^2_{\xi, \tau}}
\sum_{n \in \Z} \lb n \rb ^{-b}  \| f^n_{1}(\xi_1, -|\xi_1|^2 +n+ \theta_1)\|_{L^2_{\xi_1, \theta_1}(\theta_1 = \mathcal{O}(1))} \\
&\hspace{2in}  \times \sum_{m  \in \Z}\lb  m \rb^{-b} \|f^m_{2}(\xi_2, \mp | \xi_2 | +m+ \theta_2)\|_{L^2_{\xi_2, \theta_2}(\theta_2 = \mathcal{O}(1))} \\
& \hspace{0.5in}\lesssim  L^{(d-1)/2} M_1^{-1/2} \prod_{i=0}^2 \| f_i\|_{L^2_{\xi, \tau}}
\end{align*}

So in this case, the left-hand side of \eqref{eq:case1} is bounded by 
\[ \lb M_0\rb^{s + \alpha} \lb M_1 \rb^{-s-r-2(1-b) - \frac12} L^{\frac{d-1}2} \approx \lb M_0\rb^{s + \alpha}M_0^{\frac{d-1}2} \lb M_1 \rb^{-s-r-2(1-b) - \frac12}.\] 
This is dyadically summable if  $ \alpha < r + 2 -\frac{d}2 $ for $b - \frac12>0 $ sufficiently small. \\

\noindent\underline{\textbf{Case 1.2.} $M_1 \ll M_0 \approx M_2$ and $|A| \gtrsim 1$}. 
In this case, we have $M_0 \approx M_2 \gg 1$. Thus $M \gtrsim ||\xi_0|^2 - |\xi_1|^2 \mp |\xi_2 | | \approx M_0^2$. 

If $M_1 \gg 1$, we proceed just as in the previous case. (The restriction $M_1 \gg 1$ is necessary to ensure that the Jacobian is nonzero.) Break the shells $\{ \xi_0 : |\xi_0| \approx M_0\}$ and $\{ \xi_2 : |\xi_2 | \approx M_2\}$ into squares (or hypercubes) of side length $L \approx M_1$ and change variables as in Case 1.1.  This results in the bound 
\[ \lb M_0 \rb^{s + \alpha - r - 2(1-b)} \lb M_1\rb^{-s} M_1^{-\frac12} L^{\frac{d-1}2} \approx \lb M_0 \rb^{s + \alpha - r - 2(1-b)} \lb M_1\rb^{-s + \frac{d-2}2}\]
for the left-hand side of \eqref{eq:case1}, which is dyadically summable $s + \alpha < r + 1 + \min\{ 0, s - \frac{d-2}{2}\}$, i.e. when $\alpha < \min\{ r -s + 1, r - \frac{d-4}2\}$, for $b - \frac12$ sufficiently small. 

When $M_1 \lesssim 1$, we can use an argument similar to that in Case 0. Notice that 
\begin{align*}
 \iiiint\limits_{\substack{\sum \xi_i = 0 \\ \sum \tau_i = 0 }} &\frac{f_0(\xi_0, \tau_0) f_1(\xi_1,\tau_1)f_2(\xi_2,\tau_2)}{ \lb \tau_1 + |\xi_1|^2 \rb^b \lb \tau_2 \pm | \xi_2|\rb^b}  \; d\xi_1\d\xi_2 d\tau_1\d\tau_2 \\
&\lesssim \|f_0\|_{L^2_{\xi,\tau}} \left\| \iint\frac{f_1(\xi_1, \tau_1)f_2(-\xi_0-\xi_1, -\tau_0-\tau_1)}{ \lb \tau_1 + |\xi_1|^2 \rb^b \lb-\tau_0 - \tau_1 \pm | \xi_0+\xi_1|\rb^b}  \; d\xi_1\; d\tau_1\right\|_{L^2_{\xi_0,\tau_0}(|\xi_0| \approx M_0)}.
\end{align*}
Using Cauchy-Schwartz in $\d \xi_1 \d \tau_1$ and then in $\d \xi_0 \d \tau_0$, the last norm above is bounded by 
\begin{align*}
&\left\| \iint f_1^2(\xi_1, \tau_1)f_2^2(-\xi_0-\xi_1, -\tau_0-\tau_1) \d \xi_1 \d \tau_1 \iint\limits_{|\xi_1| \approx M_1} \frac{ \lb \tau_1 + |\xi_1|^2 \rb^{-2b}}{ \lb-\tau_0 - \tau_1 \pm | \xi_0+\xi_1|\rb^{2b}}  \; d\xi_1\; d\tau_1\right\|_{L^1_{\xi_0,\tau_0}(|\xi_0| \approx M_0)}^{1/2} \\
&\lesssim \left( \sup\limits_{\substack{|\xi_0| \approx M_0 \\ \tau_0}} \quad \iint\limits_{|\xi_1| \approx M_1}  \frac{ \lb \tau_1 + |\xi_1|^2 \rb^{-2b}}{ \lb-\tau_0 - \tau_1 \pm | \xi_0+\xi_1|\rb^{2b}}\d\tau_1 \d\xi_1 \right)^{1/2} 
   \left\| f_1^2(-\cdot , - \cdot) * f_2^2(\cdot,\cdot) \right\|_{L^1_{\xi_0,\tau_0}}^{1/2} \\
&\lesssim  M_1^{d/2}\|f_1^2\|_{L^1}^{1/2}\|f_2^2\|_{L^1}^{1/2} =  M_1^{d/2}\|f_1\|_{L^2}\|f_2\|_{L^2}. 
\end{align*}
The rough bound on the supremum is obtained as follows. Using Lemma \ref{calcEst} a) gives
\begin{align*}  \sup_{\substack{|\xi_0| \approx M_0\\ \tau_0}} \quad \iint\limits_{|\xi_1| \approx M_1} \lb \tau_1 + |\xi_1|^2\rb^{-2b} \lb -\tau_0 -\tau_1 \pm | \xi_0 + \xi_1 | \rb ^{-2b}\d\tau_1 \d\xi_1  \\
\lesssim \sup_{\xi_0, \tau_0} \quad \iint\limits_{|\xi_1| \approx M_1} \lb | \xi_1|^2 \pm | \xi_0 + \xi_1| - \tau_0 \rb ^{-2b} \d \xi_1 \lesssim M_1^d.
\end{align*}
In this case, we thus bound the left-hand side of \eqref{eq:case1} by 
\[ \lb M_0\rb^{s + \alpha - r - 2(1-b)}\lb M_1 \rb^{-s} M_1^\frac{d}2, \]
which is summable when $s + \alpha < r + 1$ for $M_1 \lesssim 1$ as long as $b-\frac12$ is sufficiently small. \\

\noindent \underline{\textbf{Case 1.3.} $M_2 \ll M_0 \approx M_1$ and $|A| \gtrsim 1$}.
The same procedure as in Case 1.1 works here, with the simplification that no decomposition of the integration regions into squares is required. (The decomposition served to ensure that the projection on the integration region in $\xi_2$ onto any axis had measure at most $\min\{M_0, M_1, M_2\}$, which is automatically true when $M_2 = \min\{M_0, M_1, M_2\}$). Repeating the change of variables and ensuing argument gives 
\begin{align*}
\iiiint\limits_{\substack{\sum \xi_i = 0 \\ \sum \tau_i = 0 }} \frac{f_0(\xi_0, \tau_0) f_1(\xi_1,\tau_1)f_2(\xi_2,\tau_2)}{ \lb \tau_1 + |\xi_1|^2 \rb^b \lb \tau_2 \pm \lb \xi_2\rb\rb^b}  \d\xi_1\d\xi_2 \d\tau_1\d\tau_2 
\lesssim M_2^{\frac{d-1}2} M_0^{-\frac12} \prod_{i=0}^2 \|f_i\|_{L^2_{x,t}}. 
\end{align*}
Thus the left-hand size of \eqref{eq:case1} can be estimated by 
\[ \lb M_0 \rb^{ \alpha - (1 - b)-\frac12} \lb M_2\rb^{-r} M_2^{\frac{d-1}2 -(1-b)}.\]
For $M_2 \lesssim 1$, this sums as long as $ \alpha < 1$.  When $M_2 \gg 1$, the product is summable when $\alpha < 1 + \min\{0, r - \frac{d-2}2\}$, i.e. when $\alpha < \min\{1 , r - \frac{d-4}{4}\}$. \\

\noindent\underline{\textbf{Case 1.4. Resonance.} $|A| \ll 1$}. 
Recall that $A =  \cos \alpha + \frac{|\xi_2| \mp 1 }{2|\xi_1|} $, so when $A$ is small, $|\xi_2| \lesssim |\xi_1|$. Thus we assume that $|\xi_1| \gg 1$, since otherwise all $|\xi_i| \lesssim 1$. That region was addressed in Case 0. 

Decompose parabolically as in Case 1.1 and take another dyadic decomposition about the resonant surface: assume $|A| \approx \nu \ll 1$ dyadic. Then we need to control
\begin{equation}\label{eq:case1.4}
\begin{split}
 &\sum_{\nu \ll 1}\frac{1}{\nu^{1-b}}\sum_{n \in \Z} \sum_{m  \in \Z} \lb n \rb ^{-b} \lb m \rb^{-b} \iiiint\limits_{\substack{ \theta_i =  \mathcal{O}(1) \\ |A| \approx \nu}} f_0(-\xi_1-\xi_2, |\xi_1|^2 \pm | \xi_2 | -n -m -\theta_1 - \theta_2) \\
 &\times f^n_1(\xi_1, -|\xi_1|^2+n + \theta_1) f^m_2(\xi_2, \mp | \xi_2 | +m+ \theta_2) \d\xi_1\d\xi_2 \d \theta_1 \d \theta_2. 
\end{split}
\end{equation}
To visualize the region of integration, consider a fixed $\xi_1$. The resonant surface $A = 0$ in  $\xi_2$-space is then a slightly distorted version of a hypersphere of radius $|\xi_1|$ centered at $-\xi_1$. This sphere has equation $|\xi_2|^2 + 2 |\xi_1||\xi_2| \cos \alpha = 0$, while the actual resonant surface satisfies $|\xi_2|^2 + 2 |\xi_1||\xi_2| \cos \alpha \mp | \xi_2 |= 0$. The region of integration in $\xi_2$ is a shell centered on this curve, with thickness $\lesssim \nu M_1$. This holds since for a fixed $\xi_1$ and a fixed angle $\alpha$, 
\[ A \in [\nu, 2 \nu ] \quad \Rightarrow \quad |\xi_2| \in \Big[2|\xi_1|(\nu - \cos \alpha) \pm 1, 2|\xi_1|(2\nu - \cos \alpha) \pm 1\Big],\]
an interval of length $2 \nu |\xi_1|$. See Figure \ref{fig} for a plot of this region in $\R^2$ for $\xi_1 \in \mathbb{R}^+$. 

\begin{figure}
 \centering 
 \caption{Integration region in $\xi_2$} 
 \includegraphics[width=0.5\textwidth]{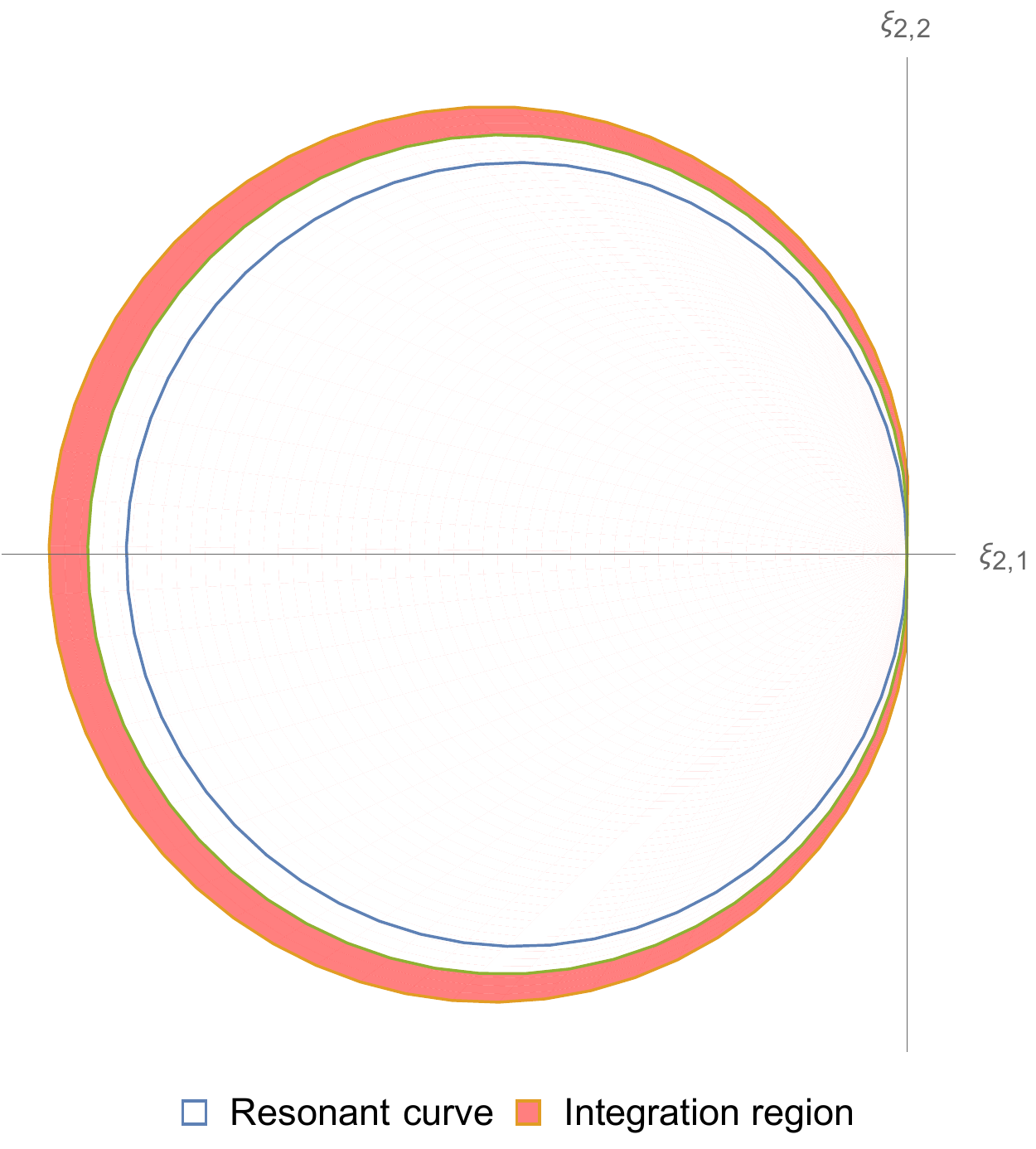}
 \label{fig}
\end{figure}

Decompose the annulus $\{ |\xi_1| \approx M_1\}$ into two parts --  a set $B$ where $| \xi_{1,i} | \approx M_1$ for each $i$, and its complement. In two dimensions, this decomposition can be described explicitly by taking
\begin{align*}
B = \left\{ \xi_1 \; : \;  |\xi_1| \approx M_1,\;  \arg(\xi_1) \in \left[\frac{\pi}{8}, \frac{3\pi}{8}\right) \cup \left[\frac{5\pi}{8}, \frac{7\pi}{8} \right)\cup \left[\frac{9\pi}{8}, \frac{11\pi}{8}\right) \cup \left[\frac{13\pi}{8}, \frac{15\pi}{8}\right) \right\}.
\end{align*}
Notice that the complement of $B$ is simply a rotation of $B$ about the origin. In higher dimensions, the set B is similar -- if we describe the space in hyperspherical coordinates, we require all $d-1$ angular variables to be bounded away from multiples of $\pi/2$ -- specifically to fall in the intervals $[\tfrac{n\pi}{8}, \tfrac{(n+2)\pi}{8})$ given above. The complement of $B$ then consists of $2^{d-1} -1$ copies of $B$, each of which can be obtained from $B$ by a sequence of $\pi/4$ radian rotations.

The remainder of this calculation will consider the two-dimensional case. There is no fundamental difference in higher dimensions; only much more onerous notation. We perform a rotation so that \eqref{eq:case1.4} can be written as a sum of two integrals over $B$. In the following $R_y$ denotes a rotation by $y$ radians. 
\begin{equation*}\label{eq:case1.4.2}
\begin{split}
 \sum_{k=0}^1 \sum_{\nu \ll 1}\frac{1}{\nu^{1-b}}\sum_{n \in \Z} \sum_{m  \in \Z} \lb n \rb ^{-b} \lb m \rb^{-b} \iiiint\limits_{\substack{ \theta_i =  \mathcal{O}(1) \\ |A| \approx \nu \\ \xi_1 \in B}} f_0(R_{\frac{k\pi}{4}}(-\xi_1-\xi_2), |\xi_1|^2 \pm | \xi_2 | -n -m -\theta_1 - \theta_2) \\
\hspace{0.5in} \times f^n_1(R_{\frac{k\pi}{4}}(\xi_1), -|\xi_1|^2+n + \theta_1) f^m_2(R_{\frac{n\pi}{4}}(\xi_2), \mp | \xi_2 | +m+ \theta_2) \d\xi_1\d\xi_2\d\theta_1\d\theta_2. 
\end{split}
\end{equation*}

Now break the $\d \xi_1 \d \xi_2$ integration into two additional cases: one where for fixed $\xi_1$ and $\xi_{2,1}$, the projection of the integration region onto the $\xi_{2,2}$ axis is length $\lesssim \nu M_1$, and one where for fixed $\xi_1$ and $\xi_{2,2}$, the projection onto the $\xi_{2,1}$ axis is length $\lesssim \nu M_1$. Once again use the change of variables from Case 1.1: set $u = -\xi_1-\xi_2$ and $v = |\xi_1|^2 \pm | \xi_2 | -n -m -\theta_1 - \theta_2$. In the first region, when the projection onto the $\xi_{2,2}$ axis is small, change variables to replace $ \d \xi_1 \d \xi_2$ with $\d \xi_{2,2} \d u \d v$. When the projection onto the $\xi_{2,1}$ axis is small, use $\d \xi_{2,1} \d u \d v$. 

Following exactly the same steps as in Case 1.1, we bound the \eqref{eq:case1.4} by 
\[ \sum_{\nu \ll 1} \frac{1}{\nu^{1-b}}(\nu M_1)^{\frac12}/M_1^{\frac12} \lesssim 1, \]
using the fact that $b > \frac12$. In general dimensions, the bound is $M_2^{\frac{d-2}2}$. Thus the quantity to be dyadically summed is 
\[ \lb M_0 \rb^{s + \alpha} \lb M_1\rb^{-s} \lb M_2 \rb^{-r} M_1^{b-1} M_2^{b-1}M_2^{\frac{d-2}2}. \]
When $A$ is small, there are only two possibilities: either $M_0 \approx M_1 \approx M_2 \gg 1$ or $M_2 \ll M_0 \approx M_1$. In the first case, we must sum $\lb M_0\rb^{ \alpha -r - 2(1-b)+ \frac{d-2}2}$, which is possible when $\alpha < r + 2 - \frac{d}2 = r - \frac{d-4}{2}$. 

In the second case, when $M_2 \gtrsim 1$ we get $\lb M_0 \rb^{\alpha - (1-b)} \lb M_2 \rb^{-r + \frac{d-2}2 - (1-b)}$, which sums as long as $ \alpha < \frac12 + \min\{ 0 , r  - \frac{d-3}{2}\} = \min\{\frac12, r - \frac{d-4}2\}$ for $b-\frac12$ sufficiently small. 
When $M_2 \ll 1$, we estimate the maximum modulation multiplier by one instead of $AM_1M_2$, and use the argument in Case 1.3 (merely drop the $M_i^{-(1-b)}$ factors) to get convergence when $\alpha < \frac12$. \\

\noindent \underline{\textbf{CASE 2.} $M = |\tau_1 + |\xi_1|^2|$}. \\

\noindent \underline{\textbf{Case 2.1.} $M_0 \ll M_1 \approx M_2$ and $|A| \gtrsim 1$}. 
When $M_0 \lesssim 1$,  the supremum argument in Case 1.2 applies. It gives the multiplier $\lb M_0\rb^{s + \alpha} \lb M_1\rb^{-s-r - 2(1-b)}M_0^\frac{d}2$, which sums when $s + r >- 1$, a condition which is always met when $s,r > - \frac12$. When $M_0 \gg 1$, decompose the $M_1$ and $M_2$ annuli into squares of scale $M_0$ and use a change of variables just as in Case 1.1 to get the multiplier $\lb M_0\rb^{s + \alpha + \frac{d-2}2} \lb M_1\rb^{-s-r - 2(1-b)}$. This sums when $\max\{s + \alpha + \frac{d-2}2,0\} < s + r + 1$, which holds when $s, r > -\frac12$ and $\alpha < r - \frac{d-4}{2}$. \\

\noindent \underline{\textbf{Case 2.2.} $M_1 \lesssim M_0 \approx M_2$ and $|A| \gtrsim 1$}.
Apply the argument in Case 1.1 to obtain the multiplier $\lb M_0\rb^{s + \alpha -r - 2(1-b) - \frac12} \lb M_1\rb^{-s}M_1 ^\frac{d-1}2$. When $M_1 \lesssim 1$, this sums if $\alpha < r -s  + \frac{3}2$. If $M_1 \gg 1$, we need $s + \alpha < \min\{s - \frac{d-1}2 ,0\}+r + \frac{3}2$, i.e. $\alpha < \min\{ r-s + \frac32, r - \frac{d-4}{2}\}$.\\

\noindent \underline{\textbf{Case 2.3.} $M_2 \ll M_0 \approx M_1$ and $|A| \gtrsim 1$}.
Proceed as in Case 1.3 to obtain the multiplier 
\[\lb M_0\rb^{ \alpha - (1-b) - \frac12} \lb M_2 \rb^{-r} M_2^{\frac{d-1}2-(1-b)}.\]
When $M_2 \lesssim 1$, this sums as long as $ \alpha <  1$. When $M_2 \gg 1$, we require $ \alpha  < 1 + \min\{r - \frac{d-2}2, 0 \}= \min\{1, r - \frac{d-4}{2}\}$.\\

\noindent \underline{\textbf{Case 2.4. Resonance.} $|A| \ll 1$}. 
The procedure is the same as that in Case 1.4, merely exchanging the roles of $(\xi_1, \tau_1)$ and $(\xi_2, \tau_2)$, and yields the same constraints.\\

\noindent \underline{\textbf{CASE 3.} $M = |\tau_2 \pm \lb \xi_2 \rb|$}. 
Here we must control 
\[ \lb M_0\rb^{s + \alpha} \lb M_1 \rb^{-s} \lb M_2 \rb^{-r}M_1^{-(1-b)}M_2^{-(1-b)} \iiiint\limits_{\substack{\sum \xi_i = 0 \\ \sum \tau_i = 0 }} \frac{f_0(\xi_0, \tau_0) f_1(\xi_1,\tau_1)f_2(\xi_2,\tau_2)}{|A|^{1-b} \lb \tau_0 - |\xi_0|^2 \rb^{b} \lb \tau_1 + |\xi_1|^2 \rb^b}  \d\xi_1\d\xi_2\d\tau_1\d\tau_2. \]
For the 2d case, the estimates in \cite{CDKS} give 
\[  \iiiint\limits_{\substack{\sum \xi_i = 0 \\ \sum \tau_i = 0 }} \frac{f_0(\xi_0, \tau_0) f_1(\xi_1,\tau_1)f_2(\xi_2,\tau_2)}{ \lb \tau_0 - |\xi_0|^2 \rb^{b} \lb \tau_1 + |\xi_1|^2 \rb^b}  \d\xi_1\d\xi_2\d\tau_1\d\tau_2 \lesssim \left( \frac{\min\{M_0,M_1\}}{\max\{M_0,M_1\}}\right)^{1/2} \prod_{i=0}^2 \|f_i\|_{L^2_{x,t}}.  \]
However, for some cases the argument relies on the $L^4L^4$ Strichartz estimate, which does not hold for $d \neq 2$. \\

\noindent \underline{\textbf{Case 3.1}. $M_0 \lesssim M_1 \approx M_2$ and $|A| \gtrsim 1$}.
In this case, the arguments from \cite{CDKS} can be applied directly to give a multiplier of $M_0^{\frac{d-1}2}M_1^{-\frac12}$. This means that we must dyadically sum 
\[\lb M_0\rb^{s + \alpha} M_0^{\frac{d-1}2} \lb M_1 \rb^{-s-r - 2(1-b) - \frac12}.\]
When $M_0 \lesssim 1$, this sums as long as $s + r > - \frac32$, a condition which is certainly met for $s,r > -\frac12$. When $M_0 \gg 1$, we need  $ \alpha  < r - \frac{d-4}2$. \\
 
\noindent \underline{\textbf{Case 3.2.} $M_1 \ll M_0 \approx M_2$ and $|A| \gtrsim 1$}.
Here again the results from \cite{CDKS} can be applied. Doing so yields $\lb M_0 \rb^{s + \alpha -r - 2(1-b) - \frac12} \lb M_1 \rb^{-s} M_1^{\frac{d-1}2}$. When $M_1 \lesssim 1$, we require $\alpha < r  -s + \frac32$. When $M_1 \gg 1$, we need $s + \alpha - r -2 (1-b) - \frac12 + \max\{\frac{d-1}2 - s ,0 \} < 0$, which holds when $\alpha < r-s + \frac32$ and $\alpha < r - \frac{d-4}{2}$.  \\

\noindent \underline{\textbf{Case 3.3.} $M_2 \ll M_0 \approx M_1$ and $|A| \gtrsim 1$}.
When $M_2 \lesssim 1$, use the supremum argument which appears in Case 1.2. This yields $\lb M_0 \rb^{s + \alpha - s - (1-b) } \lb M_2 \rb^{-r} M_2^{-(1-b)}M_2^{\frac{d-1}2}$. This sums when $\alpha < \frac12$ for $b - \frac12 > 0$ sufficiently small.
When $M_2 \gg 1$, decompose the $M_0$ and $M_1$ annuli into squares of scale $M_2$ and change variables. Unlike the previous cases though, the change of variables here gives a Jacobian of order $M_2$ (see \cite{CDKS} for details). Thus we arrive at $\lb M_0 \rb^{s + \alpha - s - (1-b) } \lb M_2 \rb^{-r + \frac{d-2}2-(1-b)}$. To sum this dyadically, we need $ \alpha  < \frac12 + \min\{0, r -\frac{d-3}2\} = \min\{ \frac12, r- \frac{d-4}{2}\}$. \\

\noindent \underline{\textbf{Case 3.4. Resonance.} $|A| \ll 1$}.
When $1 \ll M_2 \lesssim M_0 \approx M_1$, proceed as in Case 1.4. Dyadically decompose $f_0$ and $f_1$ and then change variables by letting $u = -\xi_0 - \xi_1$ and $v = | \xi_1|^2 - |\xi_0|^2 - n -m - \theta_1 - \theta_2$. This leads to a Jacobian of 
\[ \d u \d v \d \xi_{0,1} = |\xi_{0,2} + \xi_{1,2}| \d \xi_0 \d\xi_1 \quad  \text{ or } \quad \d u \d v \d \xi_{0,2} = |\xi_{0,1} + \xi_{1,1}| \d \xi_0 \d\xi_1. \]
The result in higher dimensions is similar:
\[ \d u \d v \d \xi_{0,1} \cdots \d \xi_{0,j-1} \d \xi_{0, j+1} \cdots \d \xi_{0, d} = |\xi_{2,j}| \d \xi_0 \d \xi_1. \]
Proceed just as in Case 1.4 to arrive at $\lb M_0 \rb^{ \alpha- (1-b)} \lb M_2 \rb^{-r - (1-b) + \frac{d-2}2}$, which sums when $ \alpha  < \frac12 + \min\{0, r - \frac{d-3}2\} = \min\{ \frac12, r - \frac{d-4}{2}\}$. 

When $M_2 \lesssim 1$, estimate the maximum modulation by $1$ instead of by $M_1^{-(1-b)}M_2^{-(1-b)}A$ and apply the argument from Case 1.3 (again merely removing the $M_i^{-(1-b)}$ factors) to conclude summability when $ \alpha < \frac{1}2$. 


\section*{Acknowledgments}
The author would like to thank Nikolaos Tzirakis for his guidance and for many helpful discussions. 



\begin{thebibliography}{10}

\bibitem{AA}
H. Added, S. Added, \href{http://www.ams.org/mathscinet-getitem?mr=770444}{\emph{Existence globale de solutions fortes pour les \'{e}quations de la turbulence de Langmuir en dimension 2}}, C.R. Acad. Sci. Paris S\'{e}r. I Math. \textbf{299} (1984), no. 12, p. 551-554. 

\bibitem{Ak}
T. Akahori, \href{http://projecteuclid.org/euclid.hokmj/1285766430}{\emph{Global solutions of the wave-Schr\"{o}dinger system below $L^2$}}, Hokkaido Math J. \textbf{35} (2006), no. 4, pp. 779-813. 

\bibitem{BH}
I.~Bejenaru, S.~Herr, \href{http://www.sciencedirect.com/science/article/pii/S0022123611001418}{\emph{Convolutions of singular measures and applications to the Zakharov system}}, J. Funct. Anal. \textbf{261} (2011), no. 2, p. 478-506. 

\bibitem{BHHT}
I.~Bejenaru, S.~Herr, J.~Holmer, D.~Tataru, \href{http://stacks.iop.org/0951-7715/22/i=5/a=007}{\emph{On the 2d Zakharov system with $L^2$ Schr\"{o}dinger data}}, Nonlinearity {\bf 22} (2009), no. 5, p. 1063-1089. 

\bibitem{B1}
J. Bourgain, \href{http://link.springer.com/article/10.1007/BF01896020}{\emph{Fourier transform restriction phenomena for certain lattice subsets and applications to nonlinear evolution equations. I. Schr\"{o}dinger equations}}, Geom. Funct. Anal. \textbf{3} (1993), no. 2, pp. 107-156. 

\bibitem{B2}
\rule[0.48ex]{3em}{0.14ex}\space, \href{http://imrn.oxfordjournals.org/content/1998/5/253}{\emph{Refinement of Strichartz' inequality and applications to 2D-NLS with critical nonlinearity}}, Internat. Math. Res. Not. (1998), no. 5, 253-283. 

\bibitem{CDKS} 
J.~E. Colliander, J.-M. Delort, C.~E.~Kenig, G.~Staffilani, \href{http://www.ams.org/journals/tran/2001-353-08/S0002-9947-01-02760-X/}{\emph{Bilinear estimates and applications to 2D NLS}}, Trans. Amer. Math. Soc. {\bf 353} (2001), no. 8, p. 3307-3325.  

\bibitem{CHT}
J. Colliander, J. Holmer, N. Tzirakis, \href{http://www.ams.org/journals/tran/2008-360-09/S0002-9947-08-04295-5/}{\emph{Low Regularity global well-posedness for the Zakharov and Klein-Gordon-Schr\"{o}dinger systems}}, Trans. Amer. Math. Soc. \textbf{360} (2008), no. 9, p. 4619-4638. 

\bibitem{CKSTT}
J. Colliander, M. Keel, G. Staffilani, H. Takaoka, T. Tao, \href{http://ejde.math.txstate.edu/Volumes/2001/26/colliander.pdf}{\emph{Global well-posedness for KdV in Sobolev spaces of negative index}}, Electron. J. Diff. Eq. \textbf{2001} (2001), no. 26, pp. 1-7. 

\bibitem{DET}
S. Demirbas, M.~B.~Erdo\u{g}an, N.~Tzirakis, \href{http://www.math.uiuc.edu/~berdogan/papers/paper_13_12_17.pdf}{\emph{Existence and uniqueness theory for the fractional Schrodinger equation on the torus}}, to appear. 

\bibitem{ET1}
M.~B.~Erdo\u{g}an, N.~Tzirakis, \href{http://imrn.oxfordjournals.org/content/2013/20/4589.full.pdf+html}{\emph{Global smoothing for the periodic KdV evolution}}, Int. Math. Res. Not. (2013), no. 20, p. 4589-4614.

\bibitem{ET2}
\rule[0.48ex]{3em}{0.14ex}\space, \href{http://msp.org/apde/2013/6-3/p04.xhtml}{\emph{Smoothing and global attractors for the Zakharov system on the torus}}, Analysis and PDE \textbf{6}, 2013, no. 3, p. 723-750.

\bibitem{FT}
I. Fukuda, M. Tsutsumi,
\href{http://projecteuclid.org/euclid.pja/1195518563}{\emph{On the Yukawa-coupled Klein-Gordon-Schr\"{o}dinger equations in three space dimensions}}, Proc. Japan Acad. \textbf{51} (1975), no. 6, pp. 402-405. 

\bibitem{GTV}
J.~Ginibre, Y. Tsutsumi, G.~Velo, \href{http://www.sciencedirect.com/science/article/pii/S0022123697931487}{\emph{On the Cauchy problem for the Zakharov system}}, J. Funct. Anal. \textbf{151} (1997), no. 2, p. 384-436. 

\bibitem{GR}
O.~Goubet, R.~Rosa, \href{http://www.sciencedirect.com/science/article/pii/S0022039601941634}{\emph{Asymptotic smoothing and the global attractor of a weakly damped KdV equation on the real line}}, J. Differential Equations {\bf 185} (2002), no. 1, p. 25-53. 

\bibitem{Hol}
J. Holmer, \href{http://ejde.math.txstate.edu/Volumes/2007/24/holmer.pdf}{\emph{Local ill-posedness of the 1D Zakharov system}}, Electron. J. Differential Equations \textbf{2007} (2007), no. 24, pp. 1-22. 

\bibitem{KPV}
C. Kenig, G. Ponce, L. Vega, \href{http://www.sciencedirect.com/science/article/pii/S0022123685710099}{\emph{On the Zakharov and Zakharov-Schulman systems}}, J. Funct. Anal. \textbf{127} (1995), p. 204-234. 

\bibitem{Kis}
N. Kishimoto, \href{http://aimsciences.org/journals/displayArticlesnew.jsp?paperID=8369}{\emph{Resonant decomposition and the I-method for the two-dimensional Zakharov system}}, Discrete Contin. Dyn. Syst. \textbf{33} (2013), no. 9, p. 4095-4122. 

\bibitem{LW} 
K.~Lu, B.~Wang, \href{http://www.sciencedirect.com/science/article/pii/S0022039600938270}{\emph{Global attractors for the Klein-Gordon-Schr\"{o}dinger equation in unbounded domains}}, J. Differential Equations {\bf 170} (2001), no. 2, p. 281-316. 

\bibitem{OT}
T. Ozawa, Y. Tsutsumi, \href{http://www.ems-ph.org/journals/show_abstract.php?issn=0034-5318&vol=28&iss=3&rank=1}{\emph{Existence and smoothing effect of solutions for the Zakharov equations}}, RIMS Kyoto Univ. \textbf{28} (1992), 329-361. 

\bibitem{P5}
H. Pecher, \href{https://projecteuclid.org/euclid.die/1356060479#ui-tabs-1}{\emph{Global solutions of the Klein-Gordon-Schr\"{o}dinger system with rough data}}, Differential Integral Equations \textbf{17} (2004), no. 1-2, pp. 179-214. 

\bibitem{P4}
\rule[0.48ex]{3em}{0.14ex}\space, \href{http://www.emis.ams.org/journals/EJDE/Volumes/2005/41/pecher.pdf}{\emph{Global solutions with infinite energy for the one-dimensional Zakharov system}}, Electron. J. Differential Equations (2005), no. 41, 18 p. 

\bibitem{P3}
\rule[0.48ex]{3em}{0.14ex}\space, \href{http://imrn.oxfordjournals.org/content/2001/19/1027}{\emph{Global well-posedness below energy space for the 1-dimensional Zakharov system}}, Internat. Math. Res. Not. (2001), no. 19, p. 1027-1056.

\bibitem{P2}
\rule[0.48ex]{3em}{0.14ex}\space, \href{http://www.aimsciences.org/journals/displayArticlesnew.jsp?paperID=6812}{\emph{Low regularity well-posedness for the 3D Klein-Gordon-Schr\"{o}dinger system}}, Commun. Pure Appl. Anal. {\bf 11} (2012), no. 3, p. 1081-1096. 

\bibitem{P1} 
\rule[0.48ex]{3em}{0.14ex}\space, \href{http://projecteuclid.org/euclid.die/1356012829}{\emph{Some new well-posedness results for the Klein-Gordon-Schr\"{o}dinger system}}, Differential Integral Equations {\bf 25} (2012), no. 1-2, p. 117-142. 

\bibitem{SW}
S. Schochet, M. Weinstein, \href{http://projecteuclid.org/euclid.cmp/1104115852}{\emph{The nonlinear Schr\"{o}dinger limit of the Zakharov equations governing Langmuir trubulence}}, Commun. Math. Phys. \textbf{106} (1986), p. 569-580. 

\bibitem{SS}
C. Sulem, P.-L. Sulem, \href{http://www.ams.org/mathscinet-getitem?mr=552204}{\emph{Quelques r\'{e}sultats de r\'{e}gularit\'{e} pour les \'{e}quations de la turbulence de Langmuir}}, C.R. Acad. Sci. Paris S\'{e}r. A-B \textbf{289} (1979), no. 3, A173-A176. 

\bibitem{Tem}
R.~Temam, \href{http://link.springer.com/book/10.1007%2F978-1-4612-0645-3}{\emph{Infinite-dimensional dynamical systems in mechanics and physics}}. 2nd ed. Applied Mathematical Sciences 68. Springer-Verlag, New York, 1997. 

\bibitem{Tz}
N. Tzirakis, \href{http://www.tandfonline.com/doi/full/10.1081/PDE-200059260}{\emph{The Cauchy problem for the Klein-Gordon-Schr\"{o}dinger system in low dimensions below the energy space}}, Comm. Partial Differential Equations \textbf{30} (2005), no. 4-6, pp. 605-641. 

\bibitem{W}
M. Weinstein, \href{http://projecteuclid.org/euclid.cmp/1103922134}{\emph{Nonlinear Schr\"{o}dinger equations and sharp interpolation estimates}}, Comm. Math. Phys. 87 (1982/83), no. 4 p. 567-576. 

\bibitem{Zak}
V.~E.~Zakharov, \href{http://zakharov75.itp.ac.ru/static/local/zve75/zakharov/1972/1972-04-e_035_05_0908.pdf}{\emph{Collapse of Langmuir waves}}, Sov. Phys. JETP \textbf{35} (1972), p. 908-914. 

\end{thebibliography}
\end{document}